\documentclass{article}

\author{Tristan Robert}
\date{\today}

\usepackage[utf8]{inputenc} 
\usepackage[T1]{fontenc}
\usepackage{lmodern}
\usepackage{amsmath,amssymb,amsfonts,amsthm}
\usepackage[nottoc, notlof, notlot]{tocbibind}
\usepackage{bbold} 
\usepackage{thmtools}
\usepackage{enumitem}

\newcommand*{\R}{\mathbb{R}} 
\newcommand*{\N}{\mathbb{N}} 
\newcommand*{\Z}{\mathbb{Z}} 
 
\newcommand*{\D}{\mathcal{D}}

\newcommand*{\E}{\mathbf{E}} 
\newcommand*{\El}{\mathbf{E}}
 
\newcommand*{\T}{\mathbb{T}} 

\newcommand*{\X}{\mathcal{X}} 

\newcommand*{\fl}{F} 
\newcommand*{\nl}{N} 
\newcommand*{\Fl}{\mathbf{F}} 
\newcommand*{\Nl}{\mathbf{N}} 
\newcommand*{\Bl}{\mathbf{B}}

\newcommand*{\lpart}[1]{\left[#1\right]}

\newcommand*{\F}{\mathcal{F}} 

\newcommand*{\B}{\mathbf{B}}

\newcommand*{\supp}{\ensuremath{\mathrm{supp}} }

\newcommand*{\norme}[2]{\ensuremath{\left|\left|#2\right|\right|_{#1}}}

\newcommand*{\infeg}{\leqslant}
\newcommand*{\supeg}{\geqslant}

\newcommand*{\drt}{\partial_t}
\newcommand*{\drx}{\partial_x}

\newcommand*{\dry}{\partial_y}

\newcommand*{\normL}[2]{\ensuremath{\left|\left|#2\right|\right|_{L^{#1}}}}

\newcommand*{\dt}{\mathrm{d} t}

\newcommand*{\dx}{\mathrm{d} x}
\newcommand*{\dy}{\mathrm{d} y}

\newcommand*{\dtau}{\mathrm{d} \tau}

\newcommand*{\e}{\mathrm{e}}

\newcommand*{\crochet}[1]{{\displaystyle \left\langle #1\right\rangle }}

\newcommand*{\et}{\ensuremath{\wedge}}
\newcommand*{\ou}{\ensuremath{\vee}}

\renewenvironment{proof}[1][]{~~\\ \textbf{Proof #1 : }\\}{\begin{flushright}
$\Box$
\end{flushright} }

\usepackage[english]{babel}
\newtheorem{theoreme}{Theorem}[section]
\newtheorem{proposition}[theoreme]{Proposition}
\newtheorem{corollaire}[theoreme]{Corollary}
\newtheorem{lemme}[theoreme]{Lemma}
\newtheorem{remarque}[theoreme]{Remark}

\makeatletter
\@addtoreset{equation}{section}

\makeatother

\title{On the Cauchy problem for the periodic fifth-order {KP-I} equation}
\date{}
\author{Tristan Robert\\ \emph{Université de Cergy-Pontoise}\\ \emph{Laboratoire AGM}\\ \emph{2 av. Adolphe Chauvin, 95302 Cergy-Pontoise Cedex, France}\\ \emph{tristan.robert@u-cergy.fr}}

\begin{document}
\maketitle
\begin{abstract}
The aim of this paper is to investigate the Cauchy problem for the periodic fifth order KP-I equation \[\drt u - \drx^5 u -\drx^{-1}\dry^2u + u\drx u = 0,~(t,x,y)\in\R\times\T^2\]
We prove global well-posedness for constant $x$ mean value initial data in the space $\E = \{u\in L^2,~\drx^2 u \in L^2,~\drx^{-1}\dry u \in L^2\}$ which is the natural energy space associated with this equation.
\paragraph*{Keywords :} fifth-order KP-I equation, global well-posedness.
\end{abstract}
\section{Introduction}
The KP equations arised in \cite{KP1970} as fluid mechanics models for long, weakly nonlinear two-dimensional waves with a small dependence in the tranverse variable. The usual KP equations are
\begin{equation}\label{equation KP standard}
\drt u + \drx^3 u + \epsilon \drx^{-1}\dry u + u\drx u=0
\end{equation} 
where the coefficient $\epsilon$ depends on the surface tension. The KP-I equation corresponds to $\epsilon=-1$, and the KP-II equation to $\epsilon = 1$. The Cauchy problem for these equations has been extensively studied in the past twenty years. The KP-II equation is known to be locally well-posed in the scale-critical space $H^{-1/2,0}(\R^2)$ \cite{HadacHerrKoch}, and globally well-posed in $L^2(\R\times\T)$ \cite{MST2011} and $L^2(\T^2)$ \cite{bourgain1993kp}.

As for the KP-I equation, some ill-posedness results \cite{MST2002,Koch2008} have shown that this equation does not have a semilinear nature, in the sense that it cannot be treated via a perturbative method. Ionescu, Kenig and Tataru \cite{IonescuKenigTataru2008} thus developped the short-time Fourier restriction norm method to overcome the resonant low-high interactions responsible of the quasilinear behavior, therefore obtaining global well-posedness in the energy space on $\R^2$. The adaption \cite{Zhang2015} in the periodic setting revealed a logaritheoremeic divergence in the energy estimate due to a bad frequency interaction in the resonant set, establishing therefore a local well-posedness result in the Besov space $\mathbf{B}^1_{2,1}(\T^2)$ which is strictly larger than the natural energy space. To overcome this difficulty and recover a global well-posedness result in the energy space, one can look for a better dispersion effect by either removing the assumption of periodicity in one direction \cite{Article1}, or studying higher-order models.

To pursue this latter issue, we investigate the Cauchy problem for the periodic fifth-order KP-I equation
\begin{equation}\label{equation KP1-5}
\drt u - \drx^5 u -\drx^{-1}\dry^2u + u\drx u = 0,~(t,x,y)\in\R\times\T^2
\end{equation}
First, as noticed by Bourgain \cite{bourgain1993kp} in the context of the periodic KP-II equation, any (periodic in space) solution of (\ref{equation KP1-5}) has a constant (in $y$) $x$-mean value, i.e if $(m,n)\in\Z^2$ are the Fourier variables associated with $(x,y)\in\T^2$, then the Fourier coefficients of $u$ with respect to $(x,y)$ satisfy the extra condition
\begin{equation}\label{condition coef}
\widehat{u}(t,0,n) = 0\text{ for }n\in\Z\setminus\{0\}
\end{equation}
In particular, in $t=0$ we see that the initial data must satisfy (\ref{condition coef}). As in \cite{bourgain1993kdv,bourgain1993kp}, we will make the additional assumption that $\widehat{u_0}(0,0)=0$, which is not restrictive since for data $u_0$ with non zero constant $x$ mean value, we will just have to set $v_0:= u_0- \widehat{u_0}(0,0)$ which satisfies the above condition and the modified equation
\[\drt v - \drx^5 v + c\drx v -\drx^{-1}\dry^2v  + v\drx v  = 0\]
with $c=\widehat{u_0}(0,0)$. Our analysis of (\ref{equation KP1-5}) applies equally to the above modified equation, since the extra lower order term does not change the resonnant function (see its definition in (\ref{definition fonction resonnance}) below). 

Now, to work with initial data satisfying the constraint (\ref{condition coef}), we introduce the subspace of distributions
\[\mathcal{D}'_0(\T^2):=\left\{u_0\in\mathcal{D}'(\T^2),~\widehat{u_0}(0,n)=0~\forall n\in\Z\right\}\]
in which the operator $\drx^{-1}$ is well defined as
\[\drx^{-1}u_0(x,y) := \F^{-1}\left\{\frac{1}{im}\widehat{u_0}(m,n)\right\}\]
The equation (\ref{equation KP1-5}) has another interesting feature : it possesses some conservation laws. Indeed,  the mass
\begin{equation}\label{definition masse KP}
\mathcal{M}(u_0) := \int_{\T^2}u_0^2(x,y)\dx\dy
\end{equation} and the energy
\begin{equation}\label{definition energie KP}
\mathcal{E}(u_0) := \int_{\T^2}\left\{(\drx^2 u_0)^2(x,y) + (\drx^{-1}\dry u_0)^2(x,y) - \frac{1}{3} u_0^3(x,y)\right\}\dx\dy
\end{equation}
are conserved by the flow. Therefore, to obtain a global well-posedness result, it suffices to construct local solutions to (\ref{equation KP1-5}) and they will be automatically extended globally in time as soon as the above quantities are bounded.

In view of the precedent remarks, we will thus work in the energy space defined as
\begin{multline}\label{definition espace donnee}
\E(\T^2):=\left\{u_0\in \mathcal{D}'_0(\T^2)\cap L^2(\T^2),~\drx^2u_0\in L^2(\T^2),~\drx^{-1}\dry u_0\in L^2(\T^2)\right\}
\end{multline}
 endowed with the norm
 \[\norme{\E}{u_0}:=\left(\normL{2}{u_0}^2+\normL{2}{\drx^2 u_0}^2 + \normL{2}{\drx^{-1}\dry u_0}^2\right)^{1/2}\] 
For initial data in this space, the mass is clearly finite, and due to the anisotropic Sobolev inequality of Tom \cite[Lemma 2.5]{Tom} the energy is bounded as well.
 
The first results on the Cauchy problem for (\ref{equation KP1-5}) were obtained by I\'{o}rio and Nunes in the general setting of \cite{IorioNunes} where it has been shown to be locally well-posed for zero mean value initial data in the space $H^s(\T^2)$ for $s>2$ by adapting the general quasi-linear theory of Kato. This model has then been studied in the work of Saut and Tzvetkov \cite{SautTzvetkov1999,SautTzvetkov2000,SautTzvetkov2001}, where it has been proved that this equation is globally well-posed in the energy spaces $\E(\R^2)$ and $\E(\T\times\R)$ by using the standard Bourgain method. Li and Xiao \cite{LiXiao2008} have then pushed forward with this approach and got global well-posedness in $L^2(\R^2)$. However, a counter-example is built in \cite{SautTzvetkov2001} to show the failure of the bilinear estimate in the usual Bourgain spaces when $u$ is periodic in both variables, initiating thereafter a systematic study of such quasilinear behaviours in dispersive equations (see \cite{Tzvetkov2004ill} for a detailed presentation of this issue). This implies that another approach is needed. Using the refined energy method of \cite{KochTzvetkov2003BO}, Ionescu and Kenig \cite{IonescuKenig2007} proved global well-posedness in $\E(\R\times\T)$. Very lately, Guo, Huo and Fang \cite{Guo2017} proved local well-posedness in $H^{s,0}(\R^2)$ for $s\supeg -3/4$ and the initial-value problem (\ref{equation KP1-5}) for periodic initial data in the energy space remained open. In this note, we prove the following.
\begin{theoreme}\label{theoreme}
\begin{enumerate}[label=(\alph*)]
\item\label{theoreme partie 1} For any $u_0\in\E^{\infty}(\T^2)$, there exists a unique global smooth solution \[u=:\Phi^{\infty}(u_0)\in\mathcal{C}(\R,\E^{\infty}(\T^2))\] to (\ref{equation KP1-5}) and moreover, for any $T>0$ and $\sigma\supeg 2$ we have
\begin{equation}\label{estimation energie avec donnee}
\norme{L^{\infty}_T\E^{\sigma}}{\Phi^{\infty}(u_0)}\infeg C\left(T,\sigma,\norme{\E^{\sigma}}{u_0}\right)
\end{equation}
\item\label{theoreme partie 2} Take any $u_0\in\E(\T^2)$ and $T>0$, then there exists a unique solution $u$ to (\ref{equation KP1-5}) in the class
\begin{equation}\label{definition classe unicite}
\mathcal{C}\left([-T;T],\E\right)\cap\mathbf{F}(T)\cap\B(T)
\end{equation}
This defines a continuous flow $\Phi : \E\rightarrow \mathcal{C}(\R,\E)$ which leaves $\mathcal{M}$ and $\mathcal{E}$ invariants.
\end{enumerate}
\end{theoreme}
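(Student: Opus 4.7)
The overall plan is to adapt to the fifth-order dispersion the short-time Fourier restriction norm method introduced by Ionescu-Kenig-Tataru \cite{IonescuKenigTataru2008} and used in the periodic third-order setting by Zhang \cite{Zhang2015}, complemented by a modified energy correcting the quasilinear resonance that prevents a direct Bourgain-space treatment. Part \ref{theoreme partie 1} is the easier half: for $u_0\in \E^{\infty}$ the quasilinear theory of Iorio-Nunes \cite{IorioNunes} provides a local smooth solution, and the conservation of $\mathcal{M}$ and $\mathcal{E}$, combined with Tom's anisotropic Sobolev inequality controlling the cubic term of $\mathcal{E}$ by a sub-quadratic expression in $\norme{\E}{u_0}$, yields a uniform $\E$-bound. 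The higher-order bound (\ref{estimation energie avec donnee}) follows from a tame $\E^{\sigma}$-energy estimate closed by a Gronwall argument.

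For part \ref{theoreme partie 2}, I would introduce three function spaces: $\Fl(T)$ and $\Nl(T)$ of $X^{s,b}$-type but localized at a frequency-dependent time scale (for the solution and the nonlinearity, respectively), and $\Bl(T)$, a dyadic $\ell^{2}$-summed $L^{\infty}_{T}\E$-type space. The core consists in three a priori estimates:
\begin{equation*}
\norme{\Fl(T)}{u}\infeg C\bigl(\norme{\Bl(T)}{u}+\norme{\Nl(T)}{u\drx u}\bigr),\qquad \norme{\Nl(T)}{u\drx u}\infeg C\norme{\Fl(T)}{u}^{2},
\end{equation*}
\begin{equation*}
\norme{\Bl(T)}{u}^{2}\infeg \norme{\E}{u_{0}}^{2}+C\,T^{\delta}\,\norme{\Fl(T)}{u}^{3}.
\end{equation*}
Combining these by a bootstrap produces a uniform short-time bound depending only on $\norme{\E}{u_{0}}$, and the mass and energy conservations then allow iteration to any $T>0$. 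Existence in $\E$ follows by approximating $u_{0}$ by $\E^{\infty}$ data, extracting a weak limit from the uniform $\Fl(T)$ bound, and upgrading to strong convergence in $\mathcal{C}([-T,T],\E)$ via a Bona-Smith type argument. Uniqueness in the class (\ref{definition classe unicite}) and Lipschitz continuity of $\Phi$ come from a bilinear estimate on the difference of two solutions.

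The main difficulty is the energy estimate on $\Bl(T)$. Differentiating the squared $\E$-norm in time and substituting (\ref{equation KP1-5}) produces a trilinear expression in the Fourier coefficients of $u$; non-resonant frequency configurations can be handled by integration by parts in time using lower bounds on the resonance function, which is precisely where the short-time localization becomes essential. The genuinely resonant configurations are the source of the logarithmic divergence observed in the third-order periodic case of \cite{Zhang2015}; here the stronger dispersion of $\drx^{5}$ should make the resonance grow fast enough to absorb this loss, provided one first replaces $\norme{\E}{u}^{2}$ by a modified energy obtained by subtracting a carefully chosen trilinear correction that cancels the worst interactions. Identifying this correction, verifying that it is equivalent to $\norme{\E}{u}^{2}$, and controlling its time derivative by $T^{\delta}\norme{\Fl(T)}{u}^{3}$ will be the technical heart of the paper.
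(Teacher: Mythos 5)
Your overall architecture — Iorio--Nunes for smooth data, the three spaces $\Fl(T)$, $\Nl(T)$, $\Bl(T)$ with frequency-dependent time localization, the linear/bilinear/energy triptych closed by a bootstrap, conservation laws for globalization, and a Bona--Smith approximation for existence and continuity in $\E$ — is exactly the paper's. Two points, however, deviate from what actually works. First, your treatment of part~\ref{theoreme partie 1} has a gap: you cannot obtain (\ref{estimation energie avec donnee}) from a tame $\E^{\sigma}$-estimate closed by Gronwall, because closing Gronwall requires $\int_0^T\normL{\infty}{\drx u}\dt$ to be controlled by the conserved quantities, and $\E=\E^2$ (which only gives $\drx^2u,\ \drx^{-1}\dry u\in L^2(\T^2)$) does not embed into $W^{1,\infty}(\T^2)$; Tom's anisotropic Sobolev inequality gives $L^p$ control for finite $p$ only. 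This failure is precisely the quasilinear obstruction, and the paper instead derives (\ref{estimation energie avec donnee}) from the full short-time machinery at regularity $\sigma$ (Proposition~\ref{proposition controle solutions regulieres}): one first closes the bootstrap at $\sigma=2$ to get a uniform bound on $\norme{\Fl(T)}{u}$ by $\norme{\E}{u_0}$, then feeds this into the tame $\sigma\supeg 3$ versions of the estimates, which become linear in $\X_{\sigma}$. Your part~\ref{theoreme partie 2} machinery would repair this, but only if you state the bilinear and energy estimates in the weighted spaces $\Fl^{\sigma},\Nl^{\sigma},\Bl^{\sigma}$ for all $\sigma\supeg2$, not just at the $\E$ level.

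Second, the "technical heart" you announce — a trilinear modified-energy correction to cancel resonant interactions — is not needed and is not what the paper does. The reason the logarithmic loss of \cite{Zhang2015} disappears here is not a normal form but the improved low-modulation bilinear estimate (\ref{estimation basse modulation}): in the regime $K_{max}\infeg 10^{-10}M_1M_2M_3M_{max}^2$ the resonance function has $|\partial\Omega/\partial n_1|\gtrsim M_3M_{max}$, which yields the factor $\crochet{(K_1\ou K_2)^{1/2}(M_3M_{max})^{-1/2}}$ and lets the \emph{direct} short-time energy estimate close with $1/2$ derivative to spare (Proposition~\ref{proposition estimation energie} and the remark following it). The genuinely delicate points you should anticipate instead are (i) the counting-measure version of the localized bilinear Strichartz estimates (Proposition~\ref{estimation Strichartz localisee}), which forces time intervals of size $M^{-2}$ rather than $M^{-1}$ and explains the regularity gap with the $\R^2$ theory, and (ii) the handling of the anisotropic weight $p$ in the bilinear estimate, via the decompositions (\ref{estimation bilineaire lhh poids})--(\ref{estimation bilineaire lhh poids 2}) that exploit the structure of $\Omega$.
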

The functions spaces $\E^{\infty}$, $\mathbf{F}(T)$ and $\B(T)$ are defined in section~\ref{section espaces fonctions} below.

Now, in view of the above definition of the energy space, one may be surprised by the gap in regularity between the Cauchy theory in $\R^2$ \cite{Guo2017} and our well-posedness result. This is explained by the difficulty to evaluate accurately the measure of the reasonant set in the periodic setting. See remark~\ref{remarque gap} below for more details.

To prove Theorem~\ref{theoreme}, we will then use the method of \cite{IonescuKenigTataru2008} and prove the linear, bilinear and energy estimates in the spaces $\mathbf{F}, \mathbf{N}$ and $\mathbf{B}$.

Section~\ref{section espaces fonctions} introduces general functions spaces and their basic properties. We prove some dyadic estimates in section~\ref{section estimation dyadique} which we will use in sections~\ref{section estimation energie} and~\ref{section estimation bilineaire} to prove energy and  bilinear estimates respectively. The proof of Theorem~\ref{theoreme} is finally completed in section~\ref{section preuve}.
\paragraph*{Notations}
For positive reals $a$ and $b$, $a\lesssim b$ means that there exists a positive constant $c>0$ (independant of the various parameters) such that $a \infeg c\cdot b$. \\
The notation $a\sim b$ stands for $a\lesssim b$ and $b\lesssim a$.\\
For $x\in\R^d$ we set $\crochet{x}:=(1+|x|^2)^{1/2}$.\\
For a real $x$, we write $\lpart{x}$ to denote its integer part.\\
For a set $A\subset\R^d$, $\mathbb{1}_A$ is the characteristic function of $A$ and if $A$ is Lebesgue-measurable, $|A|$ means its measure. When $A\subset \Z$ is a finite set, its cardinal is denoted $\#A$.\\
For $M>0$ and $s\in\R$, $\lesssim M^{s-}$ means $\infeg C_{\varepsilon}M^{s-\varepsilon}$ for any choice of $\varepsilon>0$ small enough. We define similarly $M^{s+}$.\\
Let $(\tau,m,n)\in \R\times\Z^2$ denote the Fourier variables of $(t,x,y)\in\R\times\T^2$.
We define the unitary group \[U(t) = \e^{-t(\drx^5+\drx^{-1}\dry^2)}= \F_{xy}^{-1}\e^{-it\omega(m,n)}\F_{xy}\] where $\omega(m,n):= m^5+\frac{n^2}{m}$.\\
We note $M,K\in 2^{\N}$ the dyadic frequency decompositions of $|m|$ and $\crochet{\tau+\omega(m,n)}$.
We define then ${\displaystyle D_{M,K} := \left\{(\tau,m,n)\in \R\times\Z^2,~~|m| \sim M, \crochet{\tau+\omega(m,n)}\sim K \right\}}$ and ${\displaystyle D_{M,\infeg K} := \bigcup_{K'\infeg K} D_{M,K'}}$.\\
We note also ${\displaystyle I_M := \left\{5M/8\infeg |m|\infeg 8M/5\right\}}$ and ${\displaystyle I_{\infeg M} := \bigcup_{M'\infeg M}I_{M'}}$.\\
We use the notations $M_1\et M_2 := \min(M_1,M_2)$ and $M_1 \ou M_2 := \max(M_1,M_2)$.\\
For $M_1,M_2,M_3 \in \R_+^*$, $M_{\min}\infeg M_{med}\infeg M_{max}$ denotes the increasing rearrangement of $M_1,M_2,M_3$, i.e 
\begin{multline*}
M_{min} := M_1\et M_2 \et M_3,~~ M_{max} = M_1 \ou M_2 \ou M_3 \\ \text{ and } M_{med} = M_1+M_2+M_3 - M_{max} - M_{min}
\end{multline*}
We define now the Littlewood-Paley decomposition. Let $\chi\in\mathcal{C}^{\infty}_c(\R)$ with $0\infeg \chi \infeg 1$, $\supp \chi \subset [-8/5;8/5]$ and $\chi \equiv 1$ on $[-5/4;5/4]$.\\
For $K\in 2^{\N}$, we then define $\eta_1(x):=\chi(x)$ and $\eta_K(x) := \chi(x/K) - \chi(2x/K)$ if $K>1$, such that $\supp \eta_K \subset I_K$ and $\eta_K \equiv 1$ on $\{4/5 K\infeg |x|\infeg 5/4 K\}$ for $K>1$. Thus $\crochet{\tau+\omega(m,n)} \in \supp \eta_K \Rightarrow \crochet{\tau+\omega} \in I_K$ and $|\tau+\omega| \sim K$ for any $K\in 2^{\N}$.\\
When needed, we may use another decomposition $\widetilde{\chi}$, $\widetilde{\eta}$ with the same properties as $\chi$, $\eta$ and satisfying $\widetilde{\chi}\equiv 1$ on $\supp \chi$ and $\widetilde{\eta}\equiv 1$ on $\supp \eta$.\\
Finally, for $\kappa\in\R_+^*$, we note $\chi_{\kappa}(x) := \chi(x/\kappa)$.

We also define the Littlewood-Paley projectors associated with the sets $I_M$ :
\[P_M u := \F^{-1}\left(\mathbb{1}_{I_M}(m)\widehat{u}\right) \text{ and }P_{\infeg M} u := \sum_{M'\infeg M}P_M u = \F^{-1}\left(\mathbb{1}_{I_{\infeg M}}(m)\widehat{u}\right)\]
\section{Functions spaces and first properties}\label{section espaces fonctions}
\subsection{Definitions}
The energy space $\E$ was defined in (\ref{definition espace energie}). 
More generally, for $\sigma\supeg 2$, we define
\begin{multline*}\label{definition E sigma}
\E^{\sigma}(\T^2) := \left\{u_0\in\D_0'(\T^2)\cap L^2(\T^2),~ \norme{\E^{\sigma}}{u_0}:= \normL{2}{\crochet{m}^{\sigma}\cdot p(m,n)\cdot \widehat{u_0}}<+\infty\right\}
\end{multline*}
and
\begin{equation*}\label{definition E infini}
\E^{\infty} = \bigcap_{\sigma\supeg 2} \E^{\sigma}
\end{equation*}
with the weight $p$ defined as
\begin{equation*}\label{definition p}
p(m,n):= \crochet{\crochet{m}^{-2}\frac{n}{m}},~(m,n)\in (\Z^*)^2
\end{equation*}
so that with this definition $\E = \E^2$.

Let $M\in 2^{\N}$. As in~\cite{IonescuKenigTataru2008}, for $b\in [0;1/2]$ the dyadic Bourgain type space is defined as
\begin{multline*}
X_M^{b} :=\left\{f(\tau,m,n)\in L^2(\R\times\Z^2), \supp f \subset \R \times I_M \times \Z, \right. \\ \left.\norme{X_M^{b}}{f}:= \sum_{K\supeg 1}K^{b}\normL{2}{\rho_K(\tau+\omega)f}<+\infty\right\}
\end{multline*}
When $b=1/2$ we simply write $X_M$.

Then, we use the $X_M^{b}$ structure uniformly on time intervals of size $M^{-2}$ :
\begin{multline*}
\fl_M^{b} := \left\{u(t,x,y)\in \mathcal{C}\left(\R,\E^{\infty}\right),~P_M u = u,\right.\\ \left.\norme{\fl_M^{b}}{u}:= \sup_{t_M \in \R} \norme{X_M^{b}}{p\cdot \F\left\{\chi_{M^{-2}}(t-t_M)u\right\}}<+\infty\right\}
\end{multline*}
and
\begin{multline*}
\nl_M := \left\{u(t,x,y)\in L^2\left(\R,\E^{\infty}\right),~P_M u = u,\right.\\ \left. \norme{\nl_M}{u}:= \sup_{t_M \in \R} \norme{X_M}{p\cdot \left|\tau+\omega +iM^2\right|^{-1} \F\left\{\chi_{M^{-2}}(t-t_M)u\right\}}<+\infty\right\}
\end{multline*}
For a function space $Y\hookrightarrow \mathcal{C}(\R,\E^{\infty})$, we set
\begin{equation*}
Y(T) := \left\{u\in\mathcal{C}\left([-T,T],\E^{\infty}\right),~\norme{Y(T)}{u}<+\infty\right\} 
\end{equation*}
endowed with
\begin{equation}\label{definition norme localisation T}
\norme{Y(T)}{u}:=\inf\left\{\norme{Y}{\widetilde{u}},~~\widetilde{u}\in Y,~~\widetilde{u}\equiv u \text{ on }[-T,T]\right\}
\end{equation}
Finally, the main function spaces are defined as
\begin{multline}\label{definition F}
\Fl^{\sigma,b}(T) := \left\{u\in\mathcal{C}([-T,T],\E^{\sigma}),\right. \\ \left.\norme{\Fl^{\sigma,b}(T)}{u}:= \left(\sum_{M\supeg 1} M^4\norme{\fl_M^{b}(T)}{P_M u}^2\right)^{1/2}<+\infty\right\}
\end{multline}
and
\begin{multline}\label{definition N}
\Nl^{\sigma}(T) := \left\{u\in L^2([-T,T],\E^{\sigma}),\right. \\ \left.\norme{\Nl^{\sigma}(T)}{u}:= \left(\sum_{M\supeg 1} M^4\norme{\nl_M(T)}{P_M u}^2\right)^{1/2}<+\infty\right\}
\end{multline}
The last space is the energy-type space which is the analogous in this context of the usual space $L^{\infty}([-T;T],\E^{\sigma})$ :
\begin{multline}\label{definition espace energie}
\Bl^{\sigma}(T) := \left\{u\in \mathcal{C}([-T,T],\E^{\sigma}),\right. \\ \left.\norme{\Bl^{\sigma}(T)}{u}:=\left(\sum_{M\supeg 1}\sup_{t_M\in [-T,T]}\norme{\E^{\sigma}}{P_M u(t_M)}^2\right)^{1/2}<+\infty\right\} 
\end{multline}
Again, for $\fl_M^{b}$ and $\Fl^{\sigma,b}(T)$, if $b=1/2$ we just drop it. We do the same for $\sigma=2$.

For the difference equation, we use similar spaces $\overline{F_M}$, $\overline{N_M}$ and $\overline{\Fl}(T)$, $\overline{\Nl}(T)$ and $\overline{\B}(T)$ which are the same as the above spaces but without the weight $p$ and at regularity $\sigma=0$. Let us notice that in view of the definition of $p$ we then have
\[\norme{F_M(T)}{u}^2\sim \norme{\overline{F_M}(T)}{u}^2+M^{-4}\norme{\overline{F_M}(T)}{\drx^{-1}\dry u}^2\]
\subsection{Basic properties}
We collect here some basic properties of the spaces $X_M$, $\Fl(T)$ and $\Nl(T)$. The proof of these results can be found e.g in \cite{IonescuKenigTataru2008,GuoOh2015,KenigPilod,Article1}.

First, for any $f_M\in X_M$, we have
\begin{equation}\label{estimation controle norme L2L1}
\norme{\ell^2_{m,n}L^1_{\tau}}{ f_M}\lesssim \norme{X_M}{f_M}
\end{equation}
Moreover, if we take $\gamma\in L^2(\R)$ satisfying 
\begin{equation}\label{hypothese localisation en temps X}
|\widehat{\gamma(\tau)}|\lesssim \crochet{\tau}^{-4}
\end{equation}
then for any $K_0\supeg 1$ and $t_0\in\R$ we have
\begin{multline}\label{propriete XM1}
K_0^{1/2}\normL{2}{\chi_{K_0}(\tau+\omega)\F\left\{\gamma(K_0(t-t_0))\F^{-1}f_M\right\}}\\+\sum_{K\supeg K_0}K^{1/2}\normL{2}{\rho_K(\tau+\omega)\F\left\{\gamma(K_0(t-t_0))\F^{-1}f_M\right\}}\lesssim \norme{X_M}{f_M}
\end{multline}
and the implicit constants are independent of $M$, $K_0$ and $t_0$.

For general time multipliers $m_M \in\mathcal{C}^4(\R)$ bounded along with its derivatives, as in \cite{IonescuKenigTataru2008} we have the bounds
\begin{equation}\label{estimation multiplicateur F}
\norme{F_M}{m_M(t)f_M}\lesssim \left(\sum_{k=0}^{4}(1\ou M)^{-k}\normL{\infty}{m_M^{(k)}}\right)\norme{F_M}{f_M}
\end{equation}
and
\begin{equation}\label{estimation multiplicateur N}
\norme{N_M}{m_M(t)f_M}\lesssim \left(\sum_{k=0}^{4}(1\ou M)^{-k}\normL{\infty}{m_M^{(k)}}\right)\norme{N_M^{b,b_1}}{f_M}
\end{equation}
We will also use \cite[Lemma 3.4]{GuoOh2015} to get a factor $T^{0+}$ in the estimates in order to avoid rescaling :
\begin{lemme}\label{lemme extraction T}
Let $T\in ]0;1]$ and $0\infeg b < 1/2$. Then, for any $u\in \fl_{M}(T)$,
\begin{equation}\label{estimation facteur T}
\norme{\fl_M^{b}(T)}{u}\lesssim T^{(1/2-b)-}\norme{\fl_M(T)}{u}
\end{equation}
and the implicit constant is independent of $M$ and $T$.
\end{lemme}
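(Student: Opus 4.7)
The approach is to construct a good extension of $u$, multiply by a time cutoff $\chi_T$, and then reduce the claimed $F_M^b$-bound to the dyadic $X_M$-estimate (\ref{propriete XM1}) applied at the threshold $K_0 \sim T^{-1}$. The gain $T^{(1/2-b)-}$ will come from trading the weight $K^{1/2}$ (from the $X_M$ norm) for the weaker weight $K^b$, which costs a factor $K_0^{b-1/2}=T^{1/2-b}$ after splitting the modulation sum at $K_0$.

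First I pick an extension $\widetilde{u}\in \fl_M$ of $u$ with $\norme{\fl_M}{\widetilde{u}}\infeg 2\norme{\fl_M(T)}{u}$. Since $T\in ]0;1]$, the cutoff $\chi_T(t)=\chi(t/T)$ equals $1$ on $[-5T/4,5T/4]\supset [-T,T]$, so $\chi_T\widetilde{u}$ is still an extension of $u$, and $P_M(\chi_T\widetilde{u})=\chi_T\widetilde{u}$ since $P_M$ commutes with time multipliers. It is therefore enough to bound, uniformly in $t_M\in\R$, the quantity $\norme{X_M^b}{p\cdot \F\{\chi_{M^{-2}}(t-t_M)\chi_T(t)\widetilde{u}\}}$. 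Fix $t_M$ and introduce $f_M := p\cdot \F\{\chi_{M^{-2}}(t-t_M)\widetilde{u}\}$, so that $\norme{X_M}{f_M}\infeg \norme{\fl_M}{\widetilde{u}}$ by definition of $\fl_M$; the quantity to be controlled is $\norme{X_M^b}{g_M}$ with $g_M := \F\{\chi_T(t)\F^{-1}f_M\}$.

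Now I apply (\ref{propriete XM1}) with $K_0:=T^{-1}\supeg 1$, $t_0=0$ and $\gamma=\chi$, noting that $\chi(K_0 t)=\chi_T(t)$. This bounds both $K_0^{1/2}\normL{2}{\chi_{K_0}(\tau+\omega)g_M}$ and $\sum_{K\supeg K_0}K^{1/2}\normL{2}{\rho_K(\tau+\omega)g_M}$ by $\norme{X_M}{f_M}$. I then split $\norme{X_M^b}{g_M}=\sum_K K^b\normL{2}{\rho_K(\tau+\omega)g_M}$ at $K_0$. For $K\supeg K_0$, factoring $K^b=K^{b-1/2}\cdot K^{1/2}$ and using $K^{b-1/2}\infeg K_0^{b-1/2}=T^{1/2-b}$ yields a contribution $\lesssim T^{1/2-b}\norme{X_M}{f_M}$. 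For $K\infeg K_0$, the support of $\rho_K(\tau+\omega)$ sits inside the region where a slightly widened $\chi_{K_0}$ equals one, so $\normL{2}{\rho_K(\tau+\omega)g_M}\lesssim \normL{2}{\chi_{K_0}(\tau+\omega)g_M}\lesssim K_0^{-1/2}\norme{X_M}{f_M}$, and summing $\sum_{K\infeg K_0}K^b\lesssim K_0^b$ gives again $\lesssim T^{1/2-b}\norme{X_M}{f_M}$.

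Taking the supremum over $t_M$ and then the infimum over extensions $\widetilde{u}$ yields the claim, with constants independent of $M$ and $T$. The only real subtlety is the low-modulation book-keeping: at $b=0$ the sum $\sum_{K\infeg K_0}K^b$ produces a factor $\log K_0\sim \log T^{-1}$, which is absorbed into the $0-$ loss and forces the slightly weakened exponent $(1/2-b)-$ rather than the clean $1/2-b$. Apart from this, the whole argument is a direct reduction to (\ref{propriete XM1}).
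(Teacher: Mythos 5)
Your argument is correct and is essentially the standard proof of this statement: the paper does not prove the lemma itself but cites \cite[Lemma 3.4]{GuoOh2015}, and your reduction — extend, cut off at time scale $T$, apply (\ref{propriete XM1}) with $K_0\sim T^{-1}$, and split the modulation sum at $K_0$ (gaining $K_0^{b-1/2}=T^{1/2-b}$ above and paying the logarithmic $T^{0-}$ loss below) — is exactly that argument. The only point to tidy up is the boundary dyadic scale $K\sim K_0$, where $\supp\rho_K$ is not quite inside $\{\chi_{K_0}=1\}$; as you note, this is handled by a slightly widened cutoff or by splitting at $K_0/2$ instead.
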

The last estimate justifies the use of $\Fl(T)$ as a resolution space :
\begin{lemme}
Let $\sigma\supeg 2$, $T\in ]0;1]$ and $u\in \Fl^{\sigma}(T)$. Then
\begin{equation}\label{estimation injection continue}
\norme{L^{\infty}_T\E^{\sigma}}{u}\lesssim \norme{\Fl^{\sigma}(T)}{u}
\end{equation}
\end{lemme}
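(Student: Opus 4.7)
The plan is to reduce the claim to the single dyadic-block estimate
\begin{equation*}
\sup_{t_M\in [-T,T]}\norme{\ell^2_{m,n}}{p(m,n)\widehat{P_M u}(t_M,m,n)}\lesssim \norme{\fl_M(T)}{P_M u}, \qquad (\star)
\end{equation*}
and then to sum in $M$ with the weight appearing in the definition of $\Fl^{\sigma}(T)$. Indeed, by definition of the $\E^{\sigma}$-norm and the frequency localization $P_M$ (on whose support $\crochet{m}\sim M$), one has
\[\norme{\E^{\sigma}}{u(t)}^2\sim \sum_{M\supeg 1} M^{2\sigma}\norme{\ell^2_{m,n}}{p(m,n)\widehat{P_M u}(t,m,n)}^2,\]
and since each term is nonnegative, $\sup_{t\in [-T,T]}$ can be pulled inside the sum. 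Once $(\star)$ is established, squaring and summing against the weight of $\Fl^{\sigma}(T)$ yields the lemma.

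To prove $(\star)$, I would fix $t_M\in [-T,T]$, pick an extension $\widetilde v\in \fl_M$ of $P_M u$ with $\widetilde v\equiv P_M u$ on $[-T,T]$ and $\norme{\fl_M}{\widetilde v}\infeg 2\norme{\fl_M(T)}{P_M u}$, and set
\[g(\tau,m,n):= p(m,n)\cdot \F\bigl(\chi_{M^{-2}}(t-t_M)\widetilde v\bigr)(\tau,m,n)\in X_M.\]
Since $\chi(0)=1$, Fourier inversion in $\tau$ evaluated at $t=t_M$ gives, up to a harmless multiplicative constant,
\[p(m,n)\widehat{P_M u}(t_M,m,n)=\int_{\R} g(\tau,m,n)\,e^{it_M\tau}\,\dtau.\]
The heart of the argument is then the standard dyadic-in-modulation embedding $X_M\hookrightarrow L^{\infty}_{t_M}\ell^2_{m,n}$: decomposing $g$ with the cutoffs $\eta_K(\tau+\omega)$, applying Cauchy--Schwarz in $\tau$ on each slab of width $\sim K$, and then Minkowski in $\ell^2_{m,n}$ produces
\[\norme{\ell^2_{m,n}}{p\cdot \widehat{P_M u}(t_M)}\infeg \sum_{K\supeg 1}K^{1/2}\normL{2}{\eta_K(\tau+\omega)g}=\norme{X_M}{g}\infeg \norme{\fl_M}{\widetilde v},\]
which is $(\star)$. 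This is precisely why a Besov-type summation over $K$ (rather than an $\ell^2$ sum) appears in the definition of $X_M$.

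There is no real obstacle; the only mild subtlety is to exploit the freedom in choosing $t_M$ in the $\fl_M$-norm so that the time cutoff $\chi_{M^{-2}}(t-t_M)$ evaluates to $1$ at the point $t=t_M$, which is automatic from the supremum built into the definition of $\fl_M$. The argument is a direct transcription of the classical embedding $X^{b,1}\hookrightarrow L^{\infty}_t L^2_x$ (for $b\supeg 1/2$) used in \cite{IonescuKenigTataru2008}.
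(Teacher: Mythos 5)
Your argument is correct and is essentially the standard one the paper relies on (the paper itself only cites \cite{IonescuKenigTataru2008,GuoOh2015,KenigPilod,Article1} for this lemma): fix $t_M$, center the cutoff $\chi_{M^{-2}}(\cdot-t_M)$ at that time so it equals $1$ there, invert in $\tau$, and invoke the embedding recorded as (\ref{estimation controle norme L2L1}), i.e. $\norme{\ell^2_{m,n}L^1_{\tau}}{f_M}\lesssim\norme{X_M}{f_M}$, before square-summing over $M$ with the weight of $\Fl^{\sigma}(T)$. No gaps; the only cosmetic mismatch is with the paper's stated weight $M^4$ in (\ref{definition F}), which should be read as $M^{2\sigma}$ for general $\sigma$, exactly as you use it.
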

\subsection{Linear estimate}\label{section estimation lineaire}
In this last subsection, we recall a linear estimate which replaces the usual estimate in the context of standard Bourgain spaces. The proof is the same as the one of \cite[Proposition 3.2]{IonescuKenigTataru2008}.
\newpage
\begin{proposition}
Let $T>0$ and $u,f\in \mathcal{C}([-T;T],\E^{\infty})$ satisfying 
\begin{equation}\label{equation lineaire inhomogene}
\drt u - \drx^5 u -\drx^{-1}\dry^2 u = f
\end{equation}
on $[-T,T]\times\T^2$.\\
Then for any $\sigma\supeg 2$, we have
\begin{equation}\label{equation estimation linéaire}
\norme{\Fl^{\sigma}(T)}{u}\lesssim \norme{\Bl^{\sigma}(T)}{u}+\norme{\Nl^{\sigma}(T)}{f}
\end{equation}
and
\begin{equation}\label{estimation lineaire difference}
\norme{\overline{\Fl}(T)}{u}\lesssim \norme{\overline{\Bl}(T)}{u}+\norme{\overline{\Nl}(T)}{f}
\end{equation}
\end{proposition}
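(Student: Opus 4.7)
The strategy is the short-time Fourier restriction method of \cite[Proposition 3.2]{IonescuKenigTataru2008} adapted to the fifth-order dispersion. The key structural observation is that the $M^{-2}$ time-localization built into $\fl_M$ and $\nl_M$ is precisely the scale at which the linear flow at frequency $|m|\sim M$ needs to be resolved, so that on any window of length $M^{-2}$ Duhamel's formula interacts cleanly with the Bourgain-type weight $\crochet{\tau+\omega}$.

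First, the estimate decouples dyadically: since $P_M$ commutes with $\drt-\drx^5-\drx^{-1}\dry^2$, each block $P_M u$ solves the same inhomogeneous equation with forcing $P_M f$, and the norms $\Fl^{\sigma}(T)$, $\Bl^{\sigma}(T)$, $\Nl^{\sigma}(T)$ are $\ell^2$ sums in $M$ with the common weight $M^4\crochet{m}^{2\sigma}$ (the Sobolev factor $\crochet{m}^{\sigma}$ being a Fourier multiplier which goes through everywhere). Hence it suffices to prove, for each $M\supeg 1$,
\[\norme{\fl_M(T)}{P_M u}\lesssim\sup_{t_M\in[-T,T]}\norme{L^2_{x,y}}{p\cdot\widehat{P_M u(t_M)}}+\norme{\nl_M(T)}{P_M f},\]
with implicit constant independent of $M$, and then sum. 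Fix $M$ and $t_M\in[-T,T]$. On $[-T,T]$, Duhamel's formula gives
\[P_M u(t)=U(t-t_M)P_M u(t_M)+\int_{t_M}^{t}U(t-s)P_M f(s)\,\ds,\]
so I have to control, in the $X_M$ norm with multiplier $p$, the time Fourier transform of $\chi_{M^{-2}}(t-t_M)$ times each of the two resulting pieces.

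For the homogeneous piece $\chi_{M^{-2}}(t-t_M)U(t-t_M)P_M u(t_M)$, the time Fourier transform concentrates at thickness $\sim M^2$ around $\tau+\omega(m,n)=0$, and its $X_M$-norm is bounded by $\norme{L^2_{x,y}}{p\cdot\widehat{P_M u(t_M)}}$ via a direct application of (\ref{propriete XM1}) with $K_0=M^2$; this produces the $\Bl$-contribution. For the Duhamel piece I would pick an extension $\widetilde f\in\nl_M$ of $P_M f$ realizing (up to $\varepsilon$) the infimum in (\ref{definition norme localisation T}), insert an auxiliary smooth cutoff $\psi$ equal to one on $\supp\chi_{M^{-2}}(\cdot-t_M)$, and use
\[\int_{t_M}^{t}U(t-s)\widetilde f(s)\,\ds=U(t-t_M)\int_{t_M}^{t}U(t_M-s)\psi(s)\widetilde f(s)\,\ds\]
to move $U(t-t_M)$ outside. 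Taking the time Fourier transform, the integration against $\mathbb{1}_{[t_M,+\infty)}$ produces multiplication by $1/(i(\tau+\omega))$ up to a boundary term at $s=t_M$ which has the form of the homogeneous piece already handled; the outer multiplication by $\chi_{M^{-2}}(t-t_M)$ becomes a convolution at scale $M^2$ in the $\tau$-variable, which regularizes the pole $1/(\tau+\omega)$ into precisely the weight $(\tau+\omega+iM^2)^{-1}$ appearing in the definition of $\nl_M$. This yields the desired bound by $\norme{\nl_M(T)}{P_M f}$.

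Summing over $M$ with the weight $M^4\crochet{m}^{2\sigma}$ gives (\ref{equation estimation linéaire}); the proof of (\ref{estimation lineaire difference}) is identical, the weight $p$ playing no role in the argument. The delicate step is the Fourier-side bookkeeping in the Duhamel piece: one must check that the boundary term from the $\int_{t_M}^{t}$ cutoff, the smoothing by $\chi_{M^{-2}}$, and the resolvent factor $1/(i(\tau+\omega))$ combine to reproduce \emph{exactly} the weight $|\tau+\omega+iM^2|^{-1}$ built into $\nl_M$, uniformly in $M$ and $t_M$. The ancillary extension and cutoff manipulations required to pass between $\fl_M,\nl_M$ and their time-localized versions $\fl_M(T),\nl_M(T)$ are then standard applications of (\ref{estimation multiplicateur F})--(\ref{estimation multiplicateur N}).
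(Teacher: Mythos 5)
Your proposal is correct and follows essentially the same route as the paper, which gives no details beyond citing that the proof is identical to \cite[Proposition 3.2]{IonescuKenigTataru2008}: dyadic decoupling, Duhamel from each $t_M$ on a window of length $M^{-2}$, the free-evolution piece giving the $\Bl$ term and the resolvent weight $|\tau+\omega+iM^2|^{-1}$ emerging from the $\int_{t_M}^t$ cutoff smoothed at scale $M^2$. The only minor imprecision is the appeal to (\ref{propriete XM1}) for the homogeneous piece, whose Fourier transform in time is not an $L^2$ function of $\tau$ before the cutoff is applied; the correct statement there is the direct computation $\norme{X_M}{\F\{\chi_{M^{-2}}(t-t_M)U(t-t_M)\phi\}}\lesssim\normL{2}{\widehat{\phi}}$, which is elementary and is what you in fact describe.
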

\section{Dyadic estimates}\label{section estimation dyadique}
We prove here several estimates on the trilinear form $\int_{\R}\sum_{\Z^2}f_1\star f_2\cdot f_3$ which replace \cite[Corollary 5.3]{IonescuKenigTataru2008} in our context.

For the proof of the following easy lemmas, we refer to \cite[Section 3]{Article1}.
\begin{lemme}
Let $f_i\in L^2(\R\times\Z^2)$ be such that $\supp f_i \subset D_{M_i,\infeg K_i}\cap \R\times\Z\times I_i$, with $M_i,K_i\in 2^{\N}$ and $I_i\subset \Z$, $i=1,2,3$. Then
\begin{equation}\label{estimation forme trilineaire triviale}
\int_{\R}\sum_{\Z^2}f_1\star f_2 \cdot f_3 \lesssim M_{min}^{1/2}K_{min}^{1/2}(\#I_{min})^{1/2}\prod_{i=1}^3\normL{2}{f_i}
\end{equation}
\end{lemme}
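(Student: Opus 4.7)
The plan is to iterate Cauchy--Schwarz and Young's convolution inequality once in each of the three independent variables $\tau$, $n$, $m$, using at each step the symmetry of the trilinear form under permutations of its arguments to pick the factor whose support in that variable is smallest. This produces the three factors $K_{\min}^{1/2}$, $(\#I_{\min})^{1/2}$ and $M_{\min}^{1/2}$ separately, rather than the weaker bound $\min_i(K_iM_i\#I_i)^{1/2}$ that a single application of Cauchy--Schwarz would yield.

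First I freeze all the integer frequencies $(m_i,n_i)_{i=1,2,3}$ satisfying the convolution constraints $m_1+m_2=m_3$, $n_1+n_2=n_3$ with $|m_i|\sim M_i$ and $n_i\in I_i$, and bound the innermost $\tau$-integral
\[
\int d\tau_1\, d\tau_2\, f_1(\tau_1,m_1,n_1)\, f_2(\tau_2,m_2,n_2)\, f_3(\tau_1+\tau_2,m_3,n_3).
\]
This is a trilinear convolution form on $\R$. Young's inequality $\normL{2}{g*h}\leq\normL{1}{g}\normL{2}{h}$ combined with Cauchy--Schwarz, after placing the factor of smallest $\tau$-support (of length $\lesssim K_{\min}$) in $L^1_\tau$ and the other two in $L^2_\tau$, bounds this by $K_{\min}^{1/2}\prod_i\norme{L^2_\tau}{f_i(\cdot,m_i,n_i)}$.

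Next I sum over $(n_1,n_2)$ under the constraint $n_1+n_2=n_3$, viewing $n\mapsto\norme{L^2_\tau}{f_i(\cdot,m_i,n)}$ as an $\ell^2_n$-sequence of support size $\lesssim \#I_i$. The exact same Cauchy--Schwarz and Young argument on the discrete group $\Z$ (with $\ell^1$ replacing $L^1$) extracts a factor $(\#I_{\min})^{1/2}$, choosing in $\ell^1_n$ the factor with the smallest $I_i$. I then repeat this a third time in the $m$-variables, viewing $m\mapsto\norme{L^2_\tau\ell^2_n}{f_i(\cdot,m,\cdot)}$ as an $\ell^2_m$-sequence of support size $\lesssim M_i$, which produces the factor $M_{\min}^{1/2}\prod_i\normL{2}{f_i}$ and completes the bound.

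I do not expect a substantial obstacle; the proof is essentially a three-stage bookkeeping exercise. The main point to verify at each stage is that freezing the other variables leaves a genuine convolution/multiplication structure in the variable being integrated, which follows at once from the product structure of $\R\times\Z\times\Z$: the three constraints decouple across the three coordinates, and the form remains symmetric in its three arguments, which is precisely what allows the minima in $K$, $\#I$ and $M$ to be selected independently at their respective stages.
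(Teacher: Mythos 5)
Your argument is correct: iterating Cauchy--Schwarz and Young's inequality coordinate by coordinate in $\tau$, $n$ and $m$, choosing at each stage (via the permutation symmetry of the form, up to harmless reflections) the factor of smallest support to place in $L^1$ (resp.\ $\ell^1$), is exactly the standard proof of this estimate and yields the product $K_{\min}^{1/2}(\#I_{\min})^{1/2}M_{\min}^{1/2}$ with the three minima possibly attained at different indices. The paper defers the proof to \cite{Article1}, but the argument given there is essentially this one, so nothing further is needed.
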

\begin{lemme}\label{lemme mesure ensemble avec projections et sections}
Let $\Lambda \subset \Z^2$. We assume that the projection of $\Lambda$ on the $m$ axis is contained in an interval $I\subset \Z$. Moreover, we assume that the cardinal of the $n$-sections of $\Lambda$ (that is the sets $\left\{n \in \Z, (m_0,n)\in \Lambda\right\}$ for a fixed $m_0$) is uniformly (in $m_0$) bounded by a constant $C$. Then we have
\begin{equation*}
\left|\Lambda\right| \infeg C \crochet{|I|}
\end{equation*}
\end{lemme}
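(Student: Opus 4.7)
The plan is a direct slicing argument, since the lemma is essentially a Fubini-type counting bound.

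First, I would decompose $\Lambda$ as a disjoint union of its vertical sections: writing $\Lambda_{m_0} := \{n\in\Z,~(m_0,n)\in\Lambda\}$, we have $\Lambda = \bigsqcup_{m_0\in\Z}\{m_0\}\times\Lambda_{m_0}$, hence the cardinality identity
\[
|\Lambda| = \sum_{m_0\in\Z}\#\Lambda_{m_0}.
\]
Next, because the projection of $\Lambda$ onto the $m$-axis is contained in $I$, the section $\Lambda_{m_0}$ is empty for every $m_0\notin I$. Therefore only indices $m_0\in I$ contribute to the sum.

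Then I would invoke the uniform bound $\#\Lambda_{m_0}\infeg C$ given by the hypothesis, which yields
\[
|\Lambda| = \sum_{m_0\in I}\#\Lambda_{m_0}\infeg C\cdot\#I.
\]
Finally, since $I$ is an interval of $\Z$, its cardinal satisfies $\#I\infeg \floor{|I|}+1\lesssim \crochet{|I|}$ (with the convention that $|I|$ denotes the length of the interval when viewed as a subset of $\R$). Combining these two bounds gives the claimed estimate $|\Lambda|\infeg C\crochet{|I|}$.

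I do not expect any real obstacle: the statement is a bookkeeping lemma whose only subtlety is noting that the passage from $\#I$ to $\crochet{|I|}$ is needed to absorb the trivial case where $I$ reduces to a single point (so that $|I|=0$ but $\#I=1$), which is precisely what the Japanese bracket $\crochet{\,\cdot\,}$ handles uniformly.
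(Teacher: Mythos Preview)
Your proof is correct; this is exactly the standard slicing/Fubini counting argument one expects for such a bookkeeping lemma. The paper does not actually spell out a proof here but simply refers to \cite[Section 3]{Article1}, so there is nothing substantive to compare---your argument is the natural one, including the observation about $\crochet{|I|}$ handling the degenerate case $|I|=0$.
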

\newpage
\begin{lemme}\label{lemme mesure ensemble avec fonction}
Let $I$, $J$ be two intervals in $\R$, and let $\varphi : I \rightarrow \R$ be a $\mathcal{C}^1$ function with $\inf_{x\in J}\left|\varphi'(x)\right|>0$. Assume that $\left\{n\in J\cap \Z,~\varphi(n) \in I\right\}\neq\emptyset$. Then
\begin{equation}\label{estimation mesure cas Z}
\#\left\{n \in J\cap \Z,~\varphi(n) \in I\right\}\lesssim \crochet{\frac{|I|}{\inf_{x\in J}|\varphi'(x)|}}
\end{equation}
\end{lemme}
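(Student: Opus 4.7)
The plan is to exploit the fact that the lower bound $\inf_{x\in J}|\varphi'(x)|>0$ forces $\varphi$ to be strictly monotonic on $J$, hence injective, and moreover gives a quantitative separation of the images of distinct integers in $J$. Combined with the fact that all relevant images must lie in the fixed interval $I$, this will immediately cap the number of integers involved.

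First I would denote $A := \{n \in J\cap\Z,~\varphi(n)\in I\}$, pick (by the nonemptiness assumption) two integers $n_{\min},n_{\max}\in A$ realizing the minimum and the maximum of $A$, and observe that any element of $A$ lies in the interval $[n_{\min},n_{\max}]\cap\Z$, so that $\#A \leqslant n_{\max}-n_{\min}+1$. Next, since $\varphi\in\mathcal{C}^1(J)$, the mean value theorem yields some $\xi\in J$ with
\[|\varphi(n_{\max})-\varphi(n_{\min})| = |\varphi'(\xi)|\cdot(n_{\max}-n_{\min}) \geqslant \inf_{x\in J}|\varphi'(x)|\cdot (n_{\max}-n_{\min}).\]
On the other hand, both $\varphi(n_{\min})$ and $\varphi(n_{\max})$ belong to $I$, so the left-hand side is bounded above by $|I|$.

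Combining these two inequalities gives
\[n_{\max}-n_{\min} \leqslant \frac{|I|}{\inf_{x\in J}|\varphi'(x)|},\]
and therefore
\[\#A \leqslant 1 + \frac{|I|}{\inf_{x\in J}|\varphi'(x)|} \lesssim \crochet{\frac{|I|}{\inf_{x\in J}|\varphi'(x)|}},\]
which is exactly the desired estimate. There is no real obstacle here; the only subtlety is the additive $+1$, which is precisely what the bracket notation $\crochet{\cdot}=(1+|\cdot|^2)^{1/2}$ is designed to absorb (covering in particular the regime where $|I|/\inf|\varphi'|<1$, where just one integer may satisfy the constraint). The hypothesis that the set is nonempty is used only to guarantee that $n_{\min}$ and $n_{\max}$ exist.
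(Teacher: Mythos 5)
Your argument is correct and is the standard mean-value-theorem proof of this counting lemma (the paper itself does not reprove it but defers to \cite{Article1}, where the same elementary argument is used). One cosmetic point: the existence of $n_{\min}$ and $n_{\max}$ follows from boundedness of the set, not from nonemptiness alone, but this is immediate since the very same mean value theorem estimate applied to an arbitrary pair $n,n'$ in the set gives $|n-n'|\infeg |I|/\inf_{x\in J}|\varphi'(x)|$, so the set is finite and your conclusion stands.
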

\begin{lemme}\label{lemme mesure parabole}
Let $a\neq 0$ ,$b$, $c$ be real numbers and $I\subset \R$ a bounded interval. Then
\begin{equation}\label{estimation mesure parabole Z}
\#\left\{n \in \Z,~an^2+bn+c \in I\right\} \lesssim \crochet{\frac{|I|^{1/2}}{|a|^{1/2}}}
\end{equation}
\end{lemme}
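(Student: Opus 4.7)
The plan is to reduce the statement to a counting problem for a pure square by completing the square. Writing $an^2+bn+c = a\bigl(n+b/(2a)\bigr)^2 + (c-b^2/(4a))$, the condition $an^2+bn+c\in I$ becomes $(n+b/(2a))^2 \in J$ where $J := (I - c + b^2/(4a))/a$ is an interval with $|J| = |I|/|a|$.

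The key elementary observation is that for any interval $J\subset \R$, the set $\{x\in\R : x^2\in J\}$ is contained in a union of at most two real intervals, each of length at most $|J|^{1/2}$. This is a consequence of the sub-additivity inequality $\sqrt{\beta}-\sqrt{\alpha}\infeg \sqrt{\beta-\alpha}$ valid for $0\infeg \alpha\infeg\beta$: if $J=[\alpha,\beta]\subset [0,+\infty)$ the preimage is the union of the two intervals $\pm[\sqrt{\alpha},\sqrt{\beta}]$, each of length $\sqrt{\beta}-\sqrt{\alpha}$; if $J$ straddles $0$ then the preimage is the single interval $[-\sqrt{\beta},\sqrt{\beta}]$ of length $2\sqrt{\beta}\infeg 2|J|^{1/2}$; and if $J\subset(-\infty,0)$ the preimage is empty.

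Translating such a union by $-b/(2a)$ does not change the lengths, so $\{n\in\Z : an^2+bn+c\in I\}$ is contained in a union of at most two real intervals each of length $\infeg |J|^{1/2} = |I|^{1/2}/|a|^{1/2}$. Since an interval of length $L$ contains at most $L+1$ integers, the cardinal is bounded by $2(|J|^{1/2}+1)\lesssim \crochet{|I|^{1/2}/|a|^{1/2}}$, which is the desired bound.

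This argument is purely elementary and presents no real obstacle; the only point to keep track of is the case distinction according to the sign of the elements of $J$. An alternative route, avoiding any case analysis, would be to apply Lemma~\ref{lemme mesure ensemble avec fonction} to $\varphi(n)=an^2+bn+c$ with $\varphi'(n)=2an+b$ on the two half-lines where $|2an+b|\supeg |a|^{1/2}|I|^{1/2}$, obtaining directly $\crochet{|I|^{1/2}/|a|^{1/2}}$, and counting integers by hand in the complementary interval around $n_0=-b/(2a)$ which has length $|I|^{1/2}/|a|^{1/2}$.
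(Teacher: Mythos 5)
Your proof is correct: completing the square reduces the count to the preimage of an interval under $x\mapsto x^2$, and the subadditivity $\sqrt{\beta}-\sqrt{\alpha}\infeg\sqrt{\beta-\alpha}$ yields at most two intervals of length $|I|^{1/2}/|a|^{1/2}$, each containing $\lesssim\crochet{|I|^{1/2}/|a|^{1/2}}$ integers; this is exactly the standard argument the paper defers to \cite{Article1}. No gap to report.
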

The main estimates of this section are the following :
\begin{proposition}\label{estimation Strichartz localisee}
Let $M_i,K_i\in 2^{\N}$, $i=1,2,3$, and take $u_1,u_2\in L^2(\R\times\Z^2)$ be such that $\supp (u_i)\subset D_{M_i,\infeg K_i}$. Then
\begin{multline}\label{estimation Strichartz grossiere}
\norme{L^2}{\mathbb{1}_{D_{M_3,\infeg K_3}}\cdot u_1\star u_2}\lesssim (K_1 \et K_2)^{1/2}M_{min}^{1/2}\\ \cdot \crochet{(K_1\ou K_2)^{1/4}(M_1\et M_2)^{1/4}}\normL{2}{u_1}\normL{2}{u_2}
\end{multline}
Moreover, if we are in the case $K_{max}\infeg 10^{-10}M_1M_2M_3M_{max}^2$, then
\begin{multline}\label{estimation basse modulation}
\norme{L^2}{\mathbb{1}_{D_{M_3,\infeg K_3}}\cdot u_1\star u_2}\lesssim (K_1\et K_2)^{1/2}M_{min}^{1/2}\\ \cdot \crochet{(K_1\ou K_2)^{1/2}(M_3M_{max})^{-1/2}}\normL{2}{u_1}\normL{2}{u_2}
\end{multline}
\end{proposition}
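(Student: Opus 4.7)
The plan is to apply Cauchy--Schwarz in the convolution integral and reduce both bounds to a single uniform measure estimate. Writing
\[u_1\star u_2(\tau, m, n) = \int\sum_{(m_1, n_1)\in\Z^2}u_1(\tau_1, m_1, n_1)\,u_2(\tau - \tau_1, m - m_1, n - n_1)\,\dtau_1,\]
Cauchy--Schwarz on the $(\tau_1, m_1, n_1)$-variables yields $\abs{u_1\star u_2(\tau, m, n)}^2 \infeg \abs{S(\tau, m, n)}\cdot B(\tau, m, n)$, where $B$ is the $L^2$ integrand and $S(\tau, m, n)$ collects those $(\tau_1, m_1, n_1)$ for which both $(\tau_1, m_1, n_1)\in D_{M_1,\infeg K_1}$ and $(\tau-\tau_1, m-m_1, n-n_1)\in D_{M_2,\infeg K_2}$. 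Integrating over $(\tau, m, n)\in D_{M_3,\infeg K_3}$ and using Fubini on $B$ reduces both (\ref{estimation Strichartz grossiere}) and (\ref{estimation basse modulation}) to estimating $\sup_{(\tau, m, n)\in D_{M_3,\infeg K_3}}\abs{S(\tau, m, n)}$. I would bound $\abs{S}$ by $(\tau_1\text{-length})\cdot(m_1\text{-projection})\cdot\sup_{m_1}\#\{n_1\}$: the $\tau_1$-length is the intersection of two intervals of lengths $2K_1, 2K_2$ and is $\lesssim K_1\et K_2$; the $m_1$-projection $I_{M_1}\cap(m-I_{M_2})$ has cardinal $\lesssim M_{min}$ by the standard dyadic triangle analysis of $m_1+m_2=m$ with $\abs{m}\sim M_3$. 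These factors account for the prefactor $(K_1\et K_2)^{1/2}M_{min}^{1/2}$; it remains to count $n_1$ for fixed $(\tau, m, n, \tau_1, m_1)$.

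For (\ref{estimation Strichartz grossiere}), I would invoke only one of the two dispersion constraints. The condition $\abs{\tau_1+\omega(m_1, n_1)}\infeg K_1$ with $\tau_1$ free in an interval of length $\sim K_2$ forces $\omega(m_1, n_1) = m_1^5 + n_1^2/m_1$ into an interval of length $\lesssim K_1\ou K_2$. For fixed $m_1$ this is a quadratic constraint in $n_1$ with leading coefficient $1/m_1$, so Lemma~\ref{lemme mesure parabole} gives a count $\lesssim\crochet{((K_1\ou K_2)M_1)^{1/2}}$. The symmetric argument using $u_2$'s constraint yields $\crochet{((K_1\ou K_2)M_2)^{1/2}}$; taking the minimum produces $\crochet{((K_1\ou K_2)(M_1\et M_2))^{1/2}}$, and square-rooting (together with $\crochet{x}^{1/2}\sim\crochet{x^{1/2}}$ for $x\supeg 0$) gives (\ref{estimation Strichartz grossiere}).

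For (\ref{estimation basse modulation}), combining all three modulation constraints shows that the full resonance
\[\Omega(m_1, n_1) := \omega(m_1, n_1) + \omega(m-m_1, n-n_1) - \omega(m, n)\]
lies in an interval of length $\lesssim K_1\ou K_2$ centered at $-(\tau+\omega(m, n))$, whose absolute value is $\infeg K_3$. A short algebraic computation (with $m_2:=m-m_1$) gives the identity
\[\Omega(m_1, n_1) = -5 m_1 m_2 m(m_1^2 + m_1 m_2 + m_2^2) + \frac{(mn_1 - m_1 n)^2}{m_1 m_2 m}.\]
Under the hypothesis $K_{max}\infeg 10^{-10}M_1 M_2 M_3 M_{max}^2$, the algebraic part has absolute value $\sim M_1 M_2 M_3 M_{max}^2\gg K_{max}$, so $(mn_1 - m_1 n)^2$ is pinned into an interval of length $\lesssim M_1 M_2 M_3(K_1\ou K_2)$ around $5(m_1 m_2 m)^2(m_1^2+m_1 m_2+m_2^2)\sim (M_1 M_2 M_3 M_{max})^2$. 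Taking $\pm$ square roots contracts this by $\sim M_1 M_2 M_3 M_{max}$, so $mn_1 - m_1 n$ lies in a union of two intervals of total length $\lesssim (K_1\ou K_2)/M_{max}$; since this quantity takes values in $m\Z$ with spacing $\abs{m}\sim M_3$ as $n_1$ varies in $\Z$, the $n_1$-count is $\lesssim\crochet{(K_1\ou K_2)/(M_3 M_{max})}$, yielding (\ref{estimation basse modulation}) after square-rooting.

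The main obstacle will be this second step: deriving the algebraic form of $\Omega$ and then extracting the factor $(M_3 M_{max})^{-1/2}$ via the $\pm$ square-root contraction of the quadratic pinned by the low-modulation hypothesis. The rough bound, in contrast, is essentially a single direct application of Lemma~\ref{lemme mesure parabole}.
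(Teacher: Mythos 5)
Your skeleton (Cauchy--Schwarz on the convolution, reduction to a fiber-measure bound, lattice-point counting, and the explicit resonance identity with the square-root contraction in the low-modulation regime) is the same as the paper's, and your $n_1$-count under $K_{max}\infeg 10^{-10}M_1M_2M_3M_{max}^2$ is correct --- it is equivalent to the paper's lower bound $|\partial\Omega/\partial n_1|\gtrsim M_3M_{max}$ combined with Lemma~\ref{lemme mesure ensemble avec fonction}. However, the claim that the $m_1$-projection $I_{M_1}\cap(m-I_{M_2})$ has cardinality $\lesssim M_{min}$ is false: it has cardinality $\lesssim M_1\et M_2$, and in the regime $M_3=M_{min}\ll M_1\sim M_2$ (the High$\times$High$\to$Low case, precisely where (\ref{estimation basse modulation}) is needed in Section~\ref{section estimation bilineaire}) it is essentially all of $I_{M_1}$, of size $M_{max}$. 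Your argument therefore only gives $(M_1\et M_2)^{1/2}$ in place of $M_{min}^{1/2}$. The paper recovers $M_{min}^{1/2}$ by first splitting $u_1,u_2$ into blocks $u_{i,k}=\mathbb{1}_{[kM_3,(k+1)M_3]}(m_i)u_i$ of $m$-length $M_3$; the constraint $|m|\sim M_3$ pairs the blocks almost diagonally ($j\in[-k-c,-k+c]$), each pair has $m_1$-count $\lesssim|I_k|\et|I_j|\lesssim M_{min}$, and Cauchy--Schwarz over the block index resums to $\normL{2}{u_1}\normL{2}{u_2}$. This almost-orthogonal decomposition is indispensable and absent from your proposal.

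There is a second gap in your derivation of (\ref{estimation Strichartz grossiere}). The factor $K_1\et K_2$ is the measure of the $\tau_1$-section at \emph{fixed} $(m_1,n_1)$; once $\tau_1$ has been integrated out, the surviving constraint is that the \emph{sum} $\omega(m_1,n_1)+\omega(m-m_1,n-n_1)$ lies in a fixed interval of length $\lesssim K_1\ou K_2$. The interval you claim contains $\omega(m_1,n_1)$ alone moves with $n_1$ through $\omega(m-m_1,n-n_1)$, so your one-constraint count $\crochet{(K_1M_1)^{1/2}}$ at fixed $\tau_1$ cannot be combined with the $K_1\et K_2$ gain --- the two estimates use $\tau_1$ in incompatible orders of Fubini. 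The correct parabola in $n_1$ has leading coefficient $1/m_1+1/(m-m_1)=m/(m_1(m-m_1))$, which degenerates to $\sim M_3M_{max}^{-2}\ll(M_1\et M_2)^{-1}$ when $m_1$ and $m-m_1$ have opposite signs and comparable size, ruining the count. This is why the paper discards the localization $\mathbb{1}_{D_{M_3,\infeg K_3}}$ for this bound and invokes the reflection argument of Saut--Tzvetkov to reduce to $m_1,m-m_1\supeg 0$, where the coefficient is $\supeg(m_1\et(m-m_1))^{-1}$. Both ingredients must be added for your proof to close.
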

\begin{proof}
These estimates are the analogous of those proved in \cite[Subsections 2.1\& 2.2]{SautTzvetkov2001} in the context of the bilinear estimate in standard Bourgain spaces. The proof is very similar to that one of \cite[Proposition 5.5]{Article1}.
First, we split $u_1$ and $u_2$ depending on the value of $m_i$ on an $M_3$ scale, meaning
\begin{equation}\label{estimation decomposition echelle strichartz bilineaire}
\norme{L^2}{\mathbb{1}_{D_{M_3,\infeg K_3}}\cdot u_1\star u_2} \infeg \sum_{k\in\Z}\sum_{j\in\Z} \norme{L^2}{\mathbb{1}_{D_{M_3,\infeg K_3}}\cdot u_{1,k}\star u_{2,j}}
\end{equation} 
with
\begin{equation*}
u_{i,j} := \mathbb{1}_{[jM_3,(j+1)M_3]}(m_i)u_i
\end{equation*}
The conditions $|m|\sim M_3$, $m_1 \in [kM_3,(k+1)M_3]$ and $m-m_1 \in [j M_3;(j+1)M_3]$ require $j\in [-k-c;-k+c]$ for an absolute constant $c>0$. 

Squaring the norm in the right-hand side of (\ref{estimation decomposition echelle strichartz bilineaire}), it suffices to evaluate
\begin{multline*}
\int_{\R}\sum_{(m,n)\in\Z^2}\mathbb{1}_{D_{M_3,\infeg K_3}}(\tau,m,n)\left|\int_{\R}\sum_{(m_1,n_1)\in\Z^2} u_{1,k}(\tau_1,m_1,n_1)\right.\\
\left.\cdot u_{2,j}(\tau-\tau_1,m-m_1,n-n_1)\dtau_1\right|^2\dtau
\end{multline*}
 Using Cauchy-Schwarz inequality, the integral above is controled by
\begin{equation*}
\sup_{(\tau,m,n)\in D_{M_3,\infeg K_3}} \left|A_{\tau,m,n}\right|\cdot\normL{2}{u_{1,k}}^2\normL{2}{u_{2,j}}^2
\end{equation*}
where $A_{\tau,m,n}$ is defined as
\begin{multline*}
A_{\tau,m,n} := \left\{(\tau_1,m_1,n_1)\in\R\times\Z^2,m_1\in I_k,m-m_1\in I_j,\right.\\ \left.~\crochet{\tau_1 - \omega(m_1,n_1)}\lesssim K_1,~\crochet{\tau-\tau_1 - \omega(m-m_1,n-n_1)}\lesssim K_2\right\}
\end{multline*}
with the intervals \[I_k = I_{M_1}\cap [kM_3;(k+1)M_3]\text{ and }I_j = I_{M_2}\cap [jM_3;(j+1)M_3]\]
Using the triangle inequality in $\tau_1$, we get the bound
\[\left|A_{\tau,m,n}\right|\lesssim (K_1\et K_2) \left|B_{\tau,m,n}\right|\]
where $B_{\tau,m,n}$ is defined as
\begin{multline*}
B_{\tau,m,n} := \left\{(m_1,n_1)\in\Z^2,~m_1\in I_k,m-m_1\in I_j,\right.\\ 
\left.\crochet{\tau+\omega(m,n) -\Omega(m_1,n_1,m-m_1,n-n_1)}\lesssim (K_1\ou K_2)\right\}
\end{multline*}
and the resonant function $\Omega$ is defined as
\begin{multline}\label{definition fonction resonnance}
\Omega(m_1,n_1,m_2,n_2) = \omega(m_1,n_1)+\omega(m_2,n_2)-\omega(m_1+m_2,n_1+n_2)\\
=5m_1m_2(m_1+m_2)\alpha(m_1,m_2)-\frac{(m_1n_2-m_2n_1)^2}{m_1m_2(m_1+m_2)}\\
=5m_1m_2(m_1+m_2)\alpha(m_1,m_2)-\frac{m_1m_2}{m_1+m_2}\left(\frac{n_1}{m_1}-\frac{n_2}{m_2}\right)^2
\end{multline}  
with 
\begin{equation*}
\alpha(m_1,m_2)=m_1^2+m_1m_2+m_2^2 \sim M_{max}^2
\end{equation*}
First, in the case $K_{max}\infeg 10^{-10}M_1M_2M_3M_{max}^2$, we estimate $\left|B_{\tau,m,n}\right|$ with the help of Lemma~\ref{lemme mesure ensemble avec projections et sections} and~\ref{lemme mesure ensemble avec fonction}. Indeed, its projection on the $m_1$ axis is controled by $|I_k|\et |I_j|$. Now, we compute
\begin{multline*}
\left|\frac{\partial \Omega}{\partial n_1}\right| =2\left|\frac{n_1}{m_1}-\frac{n-n_1}{m-m_1}\right|\\
=2\left|\frac{m}{m_1(m-m_1)}\left(5m_1(m-m_1)m\alpha(m_1,m-m_1)-\Omega\right)\right|^{1/2}
\end{multline*}
Thus, from the condition $|\Omega|\lesssim K_{max}\infeg 10^{-10}M_1M_2M_3M_{max}^2$ we get 
\[\left|\frac{\partial \Omega}{\partial n_1}\right|\gtrsim \left|\frac{m}{m_1(m-m_1)}\cdot m_1(m-m_1)m\alpha(m_1,m-m_1)\right|^{1/2}\sim M_3M_{max}\]
So we can estimate $|B_{\tau,m,n}|$ in this regime by 
\[|B_{\tau,m,n}| \lesssim \crochet{|I_k|\et|I_j|}\crochet{(K_1\ou K_2)(M_3M_{max})^{-1}}\]
For (\ref{estimation Strichartz grossiere}), note that we can neglect the localization ${\displaystyle \mathbb{1}_{D_{M_3,\infeg K_3}}}$, thus we can use the argument of \cite[Lemma 4]{SautTzvetkov2001} and assume that $m_i\supeg 0$ on the support of $u_i$. To get a bound for $|B_{\tau,m,n}|$, we now use Lemma~\ref{lemme mesure parabole} instead of Lemma~\ref{lemme mesure ensemble avec fonction}. Indeed, we can write $\tau -\omega(m,n)-\Omega(m_1,n_1,m-m_1,n-n_1)$ as
\begin{multline*}
\tau - \omega(m,n)-5mm_1(m-m_1)\alpha(m_1,m-m_1)\\
+\frac{m_1^2n^2-2m_1mn_1n}{m_1m(m-m_1)}+\frac{m^2}{m_1m(m-m_1)}n_1^2
\end{multline*} 
which is a parabola in $n_1$ with leading coefficient
\[\left|\frac{m}{m_1(m-m_1)}\right|=\frac{1}{m_1}+\frac{1}{m-m_1}\supeg \frac{1}{m_1\et (m-m_1)}\]
Thus for a fixed $m_1$, the cardinal of the $n_1$-section is estimated by \[\crochet{(K_1\ou K_2)^{1/2}(M_1\et M_2)^{1/2}}\] thanks to (\ref{estimation mesure parabole Z}). So we get the final bound
\[|B_{\tau,m,n}| \lesssim \crochet{|I_k|\et|I_j|}\crochet{(K_1\ou K_2)^{1/2}M_{min}^{1/2}}\]
These bounds for $|A_{\tau,m,n}|$ finally give (\ref{estimation Strichartz grossiere}) and (\ref{estimation basse modulation}) by using Cauchy-Schwarz inequality to sum over $k\in\Z$, since $|I_k| \lesssim M_1\et M_3$ and $|I_j|\lesssim M_2\et M_3$.
\end{proof}
\begin{remarque}
In the context of standard Bourgain spaces, we cannot recover some derivatives in the regime $K_{max}<M_3M_{max}$ since \[\crochet{(K_1\ou K_2)^{1/2}(M_3M_{max})^{-1/2}}=1\] in that case. This is the main reason for the bilinear estimate to fail in \cite[Section 5]{SautTzvetkov2001} and for our choice of time localization on intervals of size $M_{max}^{-2}$.
\end{remarque}
\begin{remarque}\label{remarque gap}
Estimate (\ref{estimation basse modulation}) may seem rough, but a more carefull analysis of the dyadic bilinear estimates in the resonant case (that is, the analogous of\\ \cite[Lemma 3.1 (a)]{Guo2017} for periodic functions) in the spirit of \cite[Lemma 3.1]{Zhang2015} leads to the bound
\begin{multline*}
(K_{1}K_{3})^{1/2}M_{max}^{-1}\\
\cdot\left\{\left(\frac{K_{2}}{(M_1\et M_2)M_{max}}\right)^{1/2}\et \left[(M_1\et M_2)\crochet{\frac{K_{2}}{(M_{min}M_{max}^3}}\right]^{1/2}\right\}
\end{multline*}
showing that, in the case $K_2=K_{med}\infeg M_{min}M_{max}^3$ and $M_1\et M_2 = M_{min}$, (\ref{estimation basse modulation}) is actually optimal. Comparing with \cite[Lemma 3.1]{Guo2017}, we see why there is such a gap in regularity between the well-posedness in $\R^2$ and $\T^2$.
\end{remarque}
As in \cite[Corollary 5.3]{IonescuKenigTataru2008}, we conclude this section by summerizing the main dyadic estimates that we will use  throughout the forthcoming sections.
\begin{corollaire}\label{corollaire synthese}
Assume $M_1,M_2,M_3,K_1,K_2,K_3\in 2^{\N}$ with $K_i\supeg M_i^2$ and $f_i\in L^2(\R\times\Z^2)$ are positive functions with the support condition $\supp f_i \subset D_{M_i,K_i}$, $i=1,2$. Then
\begin{equation}\label{estimation trilineaire synthese}
\normL{2}{\mathbb{1}_{D_{M_3,K_3}} \cdot f_1\star f_2}\lesssim M_{min}^{1/2}M_{max}^{-1-2b}(K_{min}K_{max})^{1/2}K_{med}^b \normL{2}{f_1}\normL{2}{f_2}
\end{equation}
for any $b\in[1/4;1/2]$, and
\begin{equation}\label{estimation dyadique poids}
\normL{2}{\mathbb{1}_{D_{M_3,K_3}} \cdot f_1\star f_2}\lesssim M_1^{3/2}M_{min}^{1/2} K_{min}^{1/2} \normL{2}{p\cdot f_1}\normL{2}{f_2}
\end{equation}
\end{corollaire}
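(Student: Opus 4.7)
The plan is to repackage the two bounds of Proposition~\ref{estimation Strichartz localisee} together with the trivial bound (\ref{estimation forme trilineaire triviale}) into the forms (\ref{estimation trilineaire synthese}) and (\ref{estimation dyadique poids}), using duality to permute the roles of the three indices. Writing
\[\normL{2}{\mathbb{1}_{D_{M_3,K_3}}\,f_1\star f_2} = \sup_{\substack{\|g\|_{L^2}=1 \\ \supp g\subset D_{M_3,K_3}}}\Big|\int_{\R}\sum_{\Z^2}(f_1\star f_2)\,\bar g\Big|,\]
the trilinear form on the right is, up to obvious reflections of coordinates, symmetric in its three arguments, so Proposition~\ref{estimation Strichartz localisee} may be invoked with any of the three indices playing the role of the ``output.''

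For (\ref{estimation trilineaire synthese}), first observe that the dyadic convolution condition forces $M_{max}\sim M_{med}$ in the non-degenerate case (otherwise $\mathbb{1}_{D_{M_3,K_3}}\,f_1\star f_2\equiv 0$), and that the hypothesis $K_i\supeg M_i^2$ gives $K_{max}\supeg M_{max}^2$. I distinguish two regimes. In the low-modulation regime $K_{max}\leq 10^{-10}M_1M_2M_3M_{max}^2$, apply the refined estimate (\ref{estimation basse modulation}) with the output chosen as the index $k^\star$ whose frequency equals $M_{max}$: since $M_{med}\sim M_{max}$, the bracket is in its nontrivial range, and a direct algebraic computation using the identity $K_aK_b=K_{min}K_{med}K_{max}/K_{k^\star}$ (where $a,b$ are the two remaining indices) shows that the resulting bound equals $(M_{max}^2/K_{k^\star})^{1/2}$ times the right-hand side of (\ref{estimation trilineaire synthese}) at $b=1/2$; this factor is $\leq 1$ by $K_{k^\star}\supeg M_{max}^2$. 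In the complementary high-modulation regime, apply the coarse estimate (\ref{estimation Strichartz grossiere}) instead; the choice of output now depends on which modulation $K_{k^\star}$ carries, but in each sub-case the lower bound $K_{max}\gtrsim M_1M_2M_3M_{max}^2$ provides exactly enough room to absorb the bracket $\langle(K_a\vee K_b)^{1/4}(M_a\et M_b)^{1/4}\rangle$ into (\ref{estimation trilineaire synthese}) at $b=1/4$. Since the map $b\mapsto M_{max}^{-1-2b}K_{med}^b$ is monotone on $[1/4,1/2]$ (increasing if $K_{med}\supeg M_{max}^2$, decreasing otherwise), these two endpoint estimates at $b=1/4$ and $b=1/2$ together imply the inequality at every intermediate $b\in[1/4,1/2]$.

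For (\ref{estimation dyadique poids}), the strategy is different: dyadically decompose $f_1=\sum_N f_{1,N}$ with $\supp f_{1,N}\subset\{|n_1|\sim N\}$. On each piece, applying the trivial estimate (\ref{estimation forme trilineaire triviale}) with $I_1=\{|n_1|\sim N\}$ (hence $\#I_1\sim N$) yields
\[\normL{2}{\mathbb{1}_{D_{M_3,K_3}}f_{1,N}\star f_2}\lesssim M_{min}^{1/2}K_{min}^{1/2}N^{1/2}\normL{2}{f_{1,N}}\normL{2}{f_2}.\]
Since $p(m_1,n_1)\sim\max(1,N/M_1^3)$ on $\supp f_{1,N}$, one replaces $\normL{2}{f_{1,N}}$ by $\min(1,M_1^3/N)\normL{2}{p\cdot f_{1,N}}$; a Cauchy--Schwarz summation over the dyadic $N$'s then produces exactly the factor $M_1^{3/2}$ appearing in (\ref{estimation dyadique poids}).

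The main technical obstacle lies in (\ref{estimation trilineaire synthese}): the bookkeeping of several sub-cases (depending on which of $K_1,K_2,K_3$ is $K_{k^\star}$, and, in the coarse regime, on which of $K_{min},K_{med},K_{max}$ this corresponds to) requires care, but Proposition~\ref{estimation Strichartz localisee} provides enough flexibility through its three admissible choices of output index to handle each sub-case uniformly.
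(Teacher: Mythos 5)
Your proof of (\ref{estimation dyadique poids}) is precisely the paper's: dyadic decomposition of $f_1$ in $n$ around the threshold $M_1^3$, the trivial bound (\ref{estimation forme trilineaire triviale}) on each piece, conversion of $\normL{2}{f_{1,N}}$ into $\normL{2}{p\cdot f_{1,N}}$ via $p\sim\crochet{N/M_1^3}$, and Cauchy--Schwarz in $N$; nothing to add there. For (\ref{estimation trilineaire synthese}) the paper gives no details beyond citing Proposition~\ref{estimation Strichartz localisee}, and your duality/permutation scheme is the right framework, but your argument has a genuine hole in the low-modulation regime for $b<1/2$.

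The problem is the final monotonicity step. You establish the bound against the right-hand side at $b=1/2$ in the low-modulation regime and at $b=1/4$ in the high-modulation regime; these are disjoint configurations, so in neither one do you hold both endpoint estimates, and the deduction ``both endpoints imply every intermediate $b$'' cannot be run. This matters because the right-hand side is in fact always essentially increasing in $b$: the order statistics of the $K_i$ dominate those of the $M_i^2$, so $K_{med}\supeg M_{med}^2\sim M_{max}^2$ by the convolution constraint, hence $b\mapsto (K_{med}/M_{max}^2)^b$ is nondecreasing and the binding case is $b=1/4$ --- exactly the endpoint you have not checked in the low-modulation regime. Nor is it rescued by your prescription of placing the output at an index carrying $M_{max}$: take $M_1=M_2=M_3=M$, $K_3=M^2$ and $K_1=K_2$ dyadic of size $10^{-11}M^4$ (so the low-modulation condition holds). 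Your computation with output $k^\star=3$ gives $(K_1K_2K_3/K_3)^{1/2}M_{min}^{1/2}M_{max}^{-1}\sim M^{7/2}$, whereas the right-hand side of (\ref{estimation trilineaire synthese}) at $b=1/4$ is of size $M^{5/2+2b}=M^{3}$, and the discrepancy $M^{1/2}$ is not absorbed by constants. The estimate itself is still true (output $k^\star=1$ gives $\sim M^{5/2}$ here), but the correct choice of output must take the modulations into account: one wants $K_{k^\star}$ as large as possible while keeping the bracket of (\ref{estimation basse modulation}) controllable, which forces a short case analysis on which index carries $K_{max}$ relative to $M_{max}$ and $M_{min}$. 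Since the corollary is invoked with $b<1/2$ without any modulation restriction (e.g.\ for the border terms in the proof of Lemma~\ref{lemme estimation trilineaire}), this case cannot be skipped.
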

\begin{proof}
(\ref{estimation trilineaire synthese}) follows directly from (\ref{estimation Strichartz grossiere}) and (\ref{estimation basse modulation}) above. 

For the proof of (\ref{estimation dyadique poids}), we follow \cite[Lemma 5.3]{IonescuKenigTataru2008} : we split
\begin{equation*}
f_1= \sum_{N\supeg M_1^3}f_{1,N} = \mathbb{1}_{I_{\infeg M_1^3}}(n)f_1 + \sum_{N>M_1^3}\mathbb{1}_{I_N}(n)f_1
\end{equation*}
such that
\begin{equation*}
\normL{2}{\mathbb{1}_{D_{M_3,\infeg K_3}} \cdot f_1\star f_2} \lesssim \sum_{N\supeg M_1^3}N^{1/2}M_{min}^{1/2}K_{min}^{1/2}\normL{2}{f_{1,N}}\normL{2}{f_2}
\end{equation*}
after using (\ref{estimation forme trilineaire triviale}).\\
Thus, using Cauchy-Schwarz inequality in $N$, we obtain
\begin{multline*}
\normL{2}{\mathbb{1}_{D_{M_3,K_3}} \cdot f_1\star f_2} \lesssim M_{min}^{1/2}K_{min}^{1/2}\normL{2}{f_2}\sum_{N\supeg M_1^3}N^{-1/2}M_1^3\normL{2}{p \cdot f_{1,N}}\\
\lesssim M_1^{3/2}M_{min}^{1/2}K_{min}^{1/2}\normL{2}{p\cdot f_1}\normL{2}{f_2}
\end{multline*}
\end{proof}

\section{Energy estimates}\label{section estimation energie}
In this section, we prove the energy estimates which allow to control the $\B$-norm of regular solutions and the $\overline{\B}$-norm of the difference of solutions.
\begin{lemme}\label{lemme estimation trilineaire}
There exists $\mu_0>0$ small enough such that for $T\in]0;1]$ and $u_i\in \overline{\fl_{M_i}}(T)$, $i\in\{1,2,3\}$, with one of them in $\fl_{M_i}(T)$, then
\begin{equation}\label{estimation energie forme trilineaire}
\left|\int_{[0,T]\times\T^2}u_1u_2u_3 \dt\dx\dy \right|\lesssim T^{\mu_0}M_{min}^{1/2}\prod_{i=1}^3 \norme{\overline{\fl_{M_i}}(T)}{u_i}
\end{equation}
If moreover $M_1\infeg M/16$, and $u\in \overline{\fl_M}(T)$, $v\in \fl_{M_1}(T)$, we have
\begin{multline}\label{estimation terme specifique estimation energie}
\left|\int_{[0,T]\times\T^2}P_M u\cdot P_M (P_{M_1}v\cdot\drx u)\dt\dx\dy \right|\\
 \lesssim T^{\mu_0} M_1^{3/2}\norme{\overline{\fl_{M_1}}(T)}{P_{M_1}v}\sum_{M_2\sim M}\norme{\overline{\fl_{M_2}}(T)}{P_{M_2} u}^2
\end{multline}
\end{lemme}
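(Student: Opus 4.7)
I will treat the two bounds sequentially: reduce (\ref{estimation energie forme trilineaire}) to a dyadic frequency estimate through Plancherel plus a time partition, and then derive (\ref{estimation terme specifique estimation energie}) from it via an integration by parts that transfers the derivative from the high-frequency factor to the low-frequency one.

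For (\ref{estimation energie forme trilineaire}), the plan is to apply Plancherel in $(x,y)$ so the spacetime integral becomes $\int_{\R}\sum_{\Z^2}\widehat{u_1}\star\widehat{u_2}\cdot\widehat{u_3}\dtau$ (the cutoff $\mathbb{1}_{[0,T]}$ being absorbed into one factor), then introduce a smooth partition of unity $\sum_j\gamma^3(M_{max}^2t-j)\equiv 1$ with $\gamma\in\mathcal{C}^\infty_c(\R)$, producing $\lesssim 1+TM_{max}^2$ nontrivial slices of length $\sim M_{max}^{-2}$. On each slice I replace $u_i$ by an almost-extremizing extension $\tilde u_i\in\overline{\fl_{M_i}}$, so that by (\ref{propriete XM1}) the time cutoff at scale $M_{max}^{-2}\infeg M_i^{-2}$ restricts attention to modulations $K_i\supeg M_{max}^2$. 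I then decompose into dyadic modulation slabs $D_{M_i,K_i}$ and invoke Corollary~\ref{corollaire synthese}: the bound (\ref{estimation trilineaire synthese}) handles the non-resonant high-modulation contributions, while (\ref{estimation dyadique poids}) (legitimized by the hypothesis that one $u_i$ lies in $\fl_{M_i}$) controls the resonant regime in which the unbounded $n$-sum would otherwise appear. Summing the $K_i$'s against the $\ell^1$ structure of the $X_{M_i}$-norms, the combinatorial factor $K_{med}^{b-1/2}$ (with $b\in[1/4,1/2)$) is summable and produces $\lesssim M_{max}^{2b-1}$, cancelling the $M_{max}^{-1-2b}$ prefactor of (\ref{estimation trilineaire synthese}) up to an overall $M_{max}^{-2}$ per slice. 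Multiplying by the $\lesssim 1+TM_{max}^2$ slice count gives $T\cdot M_{min}^{1/2}\prod\|\tilde u_i\|$; since $T\infeg 1$ I write $T=T^{1-\mu_0}\cdot T^{\mu_0}$ and apply Lemma~\ref{lemme extraction T} on the single factor carrying the $X_{M_i}^b$-norm to turn the $T^{1-\mu_0}$ into $T^{\mu_0}$ with $\mu_0=(1/2-b)-$.

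For (\ref{estimation terme specifique estimation energie}), the self-adjointness and idempotency of $P_M$ give $\int P_M u\cdot P_M(P_{M_1}v\cdot\drx u)=\int P_M u\cdot P_{M_1}v\cdot\drx u$, and by frequency support only the shells $\drx P_{M_2}u$ with $M_2\sim M$ contribute. Symmetrizing via integration by parts yields
\[\int P_{M_1}v\,\bigl(P_M u\cdot\drx P_{M_2}u + P_{M_2}u\cdot\drx P_M u\bigr)\dx\dy = -\int \drx P_{M_1}v\cdot P_M u\cdot P_{M_2}u\dx\dy,\]
so combining the diagonal ($M_2=M$) contribution with its companions reduces the estimate to bounding $\int\drx P_{M_1}v\cdot P_M u\cdot P_{M_2}u$, in which the derivative now acts on the low-frequency factor and costs only $M_1$ rather than $M$. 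Applying part (\ref{estimation energie forme trilineaire}) with $\drx P_{M_1}v$ playing the role of the $\fl_{M_1}$-factor (which is legitimate, since $v\in\fl_{M_1}(T)$ implies $\drx v\in\fl_{M_1}(T)$ with $\norme{\overline{\fl_{M_1}}}{\drx v}\lesssim M_1\norme{\overline{\fl_{M_1}}}{v}$) and using $M_{min}^{1/2}=M_1^{1/2}$ produces the factor $M_1^{1/2}\cdot M_1=M_1^{3/2}$ along with the required sum over $M_2\sim M$.

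The main obstacle I expect is the modulation bookkeeping in the proof of (\ref{estimation energie forme trilineaire}): one must track which of the three indices carries the $X_{M_i}^b$-norm (and therefore the $T^{\mu_0}$ gain from Lemma~\ref{lemme extraction T}) across the six orderings of $(K_1,K_2,K_3)$, and verify that the $M_{max}^{2b-1}$ arising from summing $K_{med}^{b-1/2}$ recombines cleanly with the $M_{max}^{-1-2b}$ prefactor to yield a uniform $M_{max}^{-2}$ per slice. A secondary subtlety in (\ref{estimation terme specifique estimation energie}) is the treatment of off-diagonal $M_2\neq M$ contributions, where the symmetrization is less transparent than in the diagonal case $P_M u\cdot\drx P_M u=\tfrac12\drx(P_M u)^2$; a paraproduct-type rearrangement combined with the same integration by parts should close these terms as well.
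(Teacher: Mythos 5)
Your architecture is the paper's own: partition time at scale $M_{max}^{-2}$ via $\sum_{\nu}\gamma^3(M_{max}^2t-\nu)=1$, localize modulations at $K_i\supeg M_{max}^2$ through (\ref{propriete XM1}), conclude with Corollary~\ref{corollaire synthese}, and derive (\ref{estimation terme specifique estimation energie}) from (\ref{estimation energie forme trilineaire}) by moving the derivative onto the low frequency as in \cite[Lemma 6.1]{IonescuKenigTataru2008}. Two steps, however, are wrong as written. First, (\ref{estimation dyadique poids}) is not what controls the resonant regime and cannot be: it produces $M_1^{3/2}M_{min}^{1/2}K_{min}^{1/2}$ together with the \emph{weighted} norm $\normL{2}{p\cdot f_1}$, so using it there would leave an $\fl_{M_1}$-norm and a spurious factor $M_1^{3/2}$ on the right-hand side of (\ref{estimation energie forme trilineaire}). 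The low-modulation regime $K_{max}\infeg 10^{-10}M_1M_2M_3M_{max}^2$ is already covered by (\ref{estimation trilineaire synthese}), whose proof rests on (\ref{estimation basse modulation}) in exactly that case; the hypothesis that one $u_i$ lies in $\fl_{M_i}$ plays no quantitative role in (\ref{estimation energie forme trilineaire}). Second, the bookkeeping of the factor $T^{\mu_0}$ does not go through: the slice count is $1+TM_{max}^2$, not $TM_{max}^2$, and multiplying the per-slice bound $M_{min}^{1/2}M_{max}^{-2}$ by it does not produce a factor $T$, because of the $O(1)$ border slices on which $\mathbb{1}_{[0;T]}\gamma(M_{max}^2t-\nu)\neq \gamma(M_{max}^2t-\nu)$. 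You must separate the two cases, as the paper does: on the $\lesssim TM_{max}^2$ interior slices the sharp cutoff disappears, (\ref{propriete XM1}) gives $\ell^1$-summability in all three $K_i$, one may take $b=1/2$ in (\ref{estimation trilineaire synthese}), and $T$ comes from the slice count alone; on the border slices the convolution with $\widehat{\mathbb{1}_{[0;T]}}$ destroys the $\ell^1$ modulation structure of each factor (only $\sup_{K}K^{1/2}\normL{2}{\rho_K(\tau+\omega)\cdot\widehat{\mathbb{1}_{[0;T]}}\star\F\{\gamma\widetilde{u_i}\}}$ survives), which is what forces $b<1/2$ to recover summability in the modulation, and there $T^{\mu_0}$ must come from Lemma~\ref{lemme extraction T}. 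Your step ``write $T=T^{1-\mu_0}T^{\mu_0}$'' has nothing to act on for the border terms.

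For (\ref{estimation terme specifique estimation energie}) your diagonal term $M_2=M$ is handled correctly and yields $M_1^{1/2}\cdot M_1=M_1^{3/2}$, but the off-diagonal contributions $M_2\sim M$, $M_2\neq M$ are not closed by symmetrization: the companion term $\int P_{M_1}v\cdot P_{M_2}u\cdot\drx P_Mu$ you would need to pair with is simply not present in the expression being estimated, and a direct application of (\ref{estimation energie forme trilineaire}) to $\int P_{M_1}v\cdot P_Mu\cdot\drx P_{M_2}u$ costs a full derivative $M$ instead of $M_1$. What closes these terms is the commutator estimate for $[P_M,P_{M_1}v]\drx$, which gains the factor $M_1/M$ from the slow variation of the symbol of $P_M$ relative to the frequency support of $P_{M_1}v$; this is the actual content of \cite[Lemma 6.1]{IonescuKenigTataru2008} that the paper invokes, and your ``paraproduct-type rearrangement'' needs to be made into precisely that bound.
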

\begin{proof}
From symmetry, we may assume $M_1\infeg M_2\infeg M_3$. Let $\widetilde{u_i}\in \overline{\fl_{M_i}}$ be extensions $u_i$ to $\R$, satisfying ${\displaystyle \norme{\fl_{M_i}}{\widetilde{u_i}}\infeg 2\norme{\fl_{M_i}(T)}{u_i}}$.\\
Let $\gamma\in \mathcal{C}^{\infty}_c(\R)$ be such that $\gamma : \R\rightarrow [0;1]$ with $\supp \gamma \subset [-1;1]$ and satisfying
\[\forall t\in\R,~\sum_{\nu\in\Z}\gamma^3(t-\nu) = 1\]
Then
\begin{multline*}
\int_{[0;T]\times\T^2}u_1u_2u_3\dt\dx\dy\\
 \lesssim  \sum_{|\nu|\lesssim M_{max}^2}\sum_{K_1,K_2,K_3\supeg M_{max}^2}\int_{\R\times\Z^2}\left(\rho_{K_3}(\tau+\omega)\F\left\{\mathbb{1}_{[0;T]}\gamma(M_{max}^2t-\nu)\widetilde{u_3}\right\}\right)\\
\cdot \left(\rho_{K_1}(\tau+\omega)\F\left\{\mathbb{1}_{[0;T]}\gamma(M_{max}^2t-\nu)\widetilde{u_1}\right\}\right)\\
\star \left(\rho_{K_2}(\tau+\omega)\F\left\{\mathbb{1}_{[0;T]}\gamma(M_{max}^2t-\nu)\widetilde{u_2}\right\}\right)\dtau 
\end{multline*}
where there are at most $TM_{max}^2$ interior terms $\nu$ for which $\mathbb{1}_{[0;T]}\gamma(M_{max}^2t-\nu)=\gamma(M_{max}^2t-\nu)$, and at most 4 remaining border terms where the integral is non zero. The property of $X_M$ (\ref{propriete XM1}) allows us to partition the modulations at $K_i\supeg M_{max}^2$

Let us now observe that, using (\ref{propriete XM1}), for the interior terms we have
\[\sup_{\nu\in\Z}\sum_{K_i\supeg M_{max}^2}K_i^{b}\normL{2}{\rho_{K_i}(\tau+\omega)\F\left\{\gamma(M_{max}^2t-\nu)\widetilde{u_i}\right\}}\lesssim \norme{\overline{\fl_{M_i}}^{b}}{\widetilde{u_i}}\]
Thus, since we can take $\mu_0=1$ for those terms, (\ref{estimation energie forme trilineaire}) follows from (\ref{estimation trilineaire synthese}) with $b=1/2$ and the estimate above.

For the remaining border terms, we use that
\[\sup_{\nu}\sup_{K_i\supeg M_{max}^2}K_i^{1/2}\normL{2}{\rho_{K_i}(\tau+\omega)\cdot\widehat{\mathbb{1}_{[0;T]}}\star\F\left\{\gamma(M_{max}^2t-\nu)\widetilde{u_i}\right\}}\lesssim \norme{\overline{\fl_{M_i}}}{\widetilde{u_i}}\]
which follows through the same argument as for the proof of (\ref{propriete XM1}) (see \cite{Article1}). Thus we can use (\ref{estimation trilineaire synthese}) with $b<1/2$ to get (\ref{estimation energie forme trilineaire}).

(\ref{estimation terme specifique estimation energie}) then follows from the one of (\ref{estimation energie forme trilineaire}) through the same argument as in \cite[Lemma 6.1]{IonescuKenigTataru2008}.
\end{proof}
We can now state our global energy estimate.
\begin{proposition}\label{proposition estimation energie}
Let $T\in ]0;1[$ and $u\in\mathcal{C}([-T,T],\El^{\infty})$ be a solution of (\ref{equation KP1-5}) on $[-T,T]$. Then for any $\sigma \supeg 2$,
\begin{equation}\label{estimation energie globale}
\norme{\Bl^{\sigma}(T)}{u}^2\lesssim \norme{\El^{\sigma}}{u_0}^2 + T^{\mu_0}\norme{\Fl(T)}{u}\norme{\Fl^{\sigma}(T)}{u}^2
\end{equation}
\end{proposition}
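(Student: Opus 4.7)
\emph{Energy identity.} For each dyadic $M\in 2^{\N}$ and $t_M\in[-T,T]$, the plan is to apply $P_M$ to~(\ref{equation KP1-5}) and pair with the Fourier multiplier $T_\sigma^{2} P_M u$, where $T_\sigma$ has symbol $\crochet{m}^\sigma p(m,n)$ (so that $\norme{L^2}{T_\sigma P_M v}=\norme{\E^\sigma}{P_M v}$). Since $\drx^5$ and $\drx^{-1}\dry^2$ both correspond to purely imaginary Fourier symbols, the dispersive contribution cancels. Integrating in $t$ from $0$ to $t_M$, summing over $M$, and taking the supremum in $t_M$, the bound~(\ref{estimation energie globale}) reduces to showing
\[
\sum_{M\supeg 1}\sup_{t_M\in[-T,T]}\left|\int_{0}^{t_M}\int_{\T^2} T_\sigma P_M u \cdot T_\sigma P_M(u\drx u)\,\dx\,\dy\,\dt\right|\lesssim T^{\mu_0}\norme{\Fl(T)}{u}\norme{\Fl^\sigma(T)}{u}^2.
\]

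\emph{Paraproduct decomposition.} I would expand $u=\sum_{M_1}P_{M_1}u$ and $u=\sum_{M_2}P_{M_2}u$ in the nonlinearity. The support condition $|m|\sim M$ with $m=m_1+m_2$ leaves only three regimes: \emph{(i)} $M_1\sim M$, $M_2\infeg M$; \emph{(ii)} $M_1\sim M_2\supeg M$; \emph{(iii)} $M_1\ll M$, $M_2\sim M$. In regimes~(i) and~(ii) the derivative $\drx$ falls on a factor of size at most $M_{med}$, so no derivative is lost. Lemma~\ref{lemme estimation trilineaire}~(\ref{estimation energie forme trilineaire}) applied to the trilinear form---combined, in regime~(ii), with the $p$-weighted bound~(\ref{estimation dyadique poids}) to transfer the $p$-factor from $P_M u$ to one of the high-frequency inputs---produces contributions that sum, after Cauchy--Schwarz, to $T^{\mu_0}\norme{\Fl(T)}{u}\norme{\Fl^\sigma(T)}{u}^2$.

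\emph{Resonant case.} Regime~(iii) is the genuinely delicate one: the derivative produces a factor $\sim M$ on the high-frequency factor and a direct application of~(\ref{estimation energie forme trilineaire}) would lose one full derivative. I would invoke instead the renormalized estimate~(\ref{estimation terme specifique estimation energie}) via the commutator identity
\[
T_\sigma P_M\bigl(P_{M_1}u\cdot\drx u\bigr) = P_M\bigl(P_{M_1}u\cdot T_\sigma\drx u\bigr) + \bigl[T_\sigma,P_{M_1}u\bigr]\drx P_M u.
\]
Thanks to the frequency separation $M_1\ll M$, the commutator $[T_\sigma,P_{M_1}u]$ carries a gain of order $M_1/M$ and its contribution is absorbed by~(\ref{estimation energie forme trilineaire}); the main term is precisely the structure handled by~(\ref{estimation terme specifique estimation energie}) applied to the renormalized field $T_\sigma u$, yielding
\[
\lesssim T^{\mu_0}M_1^{3/2}M^{2\sigma}\norme{\overline{\fl_{M_1}}(T)}{P_{M_1}u}\sum_{M_2\sim M}\norme{\fl_{M_2}(T)}{P_{M_2}u}^2.
\]

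\emph{Summation and main obstacle.} Summing over $M$ and applying Cauchy--Schwarz in $M_1$ against the convergent geometric series $\sum_{M_1\in 2^{\N}}M_1^{-1}<+\infty$ gives $\sum_{M_1}M_1^{3/2}\norme{\overline{\fl_{M_1}}(T)}{P_{M_1}u}\lesssim \norme{\Fl(T)}{u}$, while the dyadic sum in $M$ reconstructs $\norme{\Fl^\sigma(T)}{u}^2$, which combined with the contributions of~(i) and~(ii) closes~(\ref{estimation energie globale}). The principal technical hurdle in the whole argument is the commutator analysis in the resonant case: the weight $p(m,n)=\crochet{\crochet{m}^{-2}n/m}$ is not smooth in the classical pseudodifferential sense, so the gain $M_1/M$ has to be extracted by a careful mean-value argument on each frequency shell, exploiting that the problematic range $|n|\gtrsim|m|^3$ is stable under a small shift of $m$ when $M_1\ll M$.
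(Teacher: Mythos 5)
Your overall scheme (dyadic energy identity, paraproduct splitting into three frequency regimes, Lemma~\ref{lemme estimation trilineaire} for the non-resonant interactions and the renormalized estimate (\ref{estimation terme specifique estimation energie}) for the Low$\times$High$\to$High one) matches the paper's, but it diverges at the crucial point and the divergence opens a genuine gap. You conjugate the equation by the single operator $T_\sigma$ with symbol $\crochet{m}^{\sigma}p(m,n)$ and claim that the commutator $[T_\sigma,P_{M_1}u]\drx P_M u$ gains a factor $M_1/M$ from the frequency separation $M_1\ll M$. This is false as stated: the symbol of $T_\sigma$ depends on $n$ as well as $m$ (its $n$-derivative is of size $\crochet{m}^{\sigma-3}$), and $P_{M_1}$ localizes only the $x$-frequency, so the $y$-frequency $n_1$ of the low factor is unbounded. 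Consequently the symbol difference $\left|\crochet{m}^{\sigma}p(m,n)-\crochet{m_2}^{\sigma}p(m_2,n_2)\right|$ contains a term of order $M^{\sigma-3}|n_1|$ which is not $O(M_1M^{-1})\crochet{m_2}^{\sigma}p(m_2,n_2)$; it can only be absorbed by the weight $p(m_1,n_1)$ on the \emph{low}-frequency factor (this is precisely the content of the bound (\ref{estimation bilineaire lhh poids}) used in the bilinear section, which carries the factor $M_1^{3}M_3^{-3}p(m_1,n_1)$ rather than $M_1M^{-1}$). Your proposed remedy --- a mean-value argument in $m$ on each frequency shell --- addresses only the $m$-shift and leaves the $n$-shift untouched, so the ``principal technical hurdle'' you correctly identify is not resolved by the argument you sketch.

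The paper avoids any commutator with $p$ altogether: since $\crochet{m}^{\sigma}p(m,n)\sim\crochet{m}^{\sigma}+\crochet{m}^{\sigma-2}|n/m|$, the $\E^{\sigma}$-norm of $P_M u$ splits into $M^{\sigma}\normL{2}{P_Mu}$ and $M^{\sigma-2}\normL{2}{\drx^{-1}\dry P_Mu}$. The first piece is handled exactly as in your regimes (i)--(iii) with the constant multiplier $M^{2\sigma}$ on the dyadic shell, so the only commutator needed is $[P_M,P_{M_1}u]$, whose symbol is smooth and depends on $m$ only, giving the honest $M_1/M$ gain already encapsulated in (\ref{estimation terme specifique estimation energie}). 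The second piece is handled by observing that $v:=\drx^{-1}\dry u$ satisfies the transport-type equation $\drt v-\drx^5v-\drx^{-1}\dry^2v+u\drx v=0$ (because $\drx^{-1}\dry(u\drx u)=u\drx v$), so that $\normL{2}{\drx^{-1}\dry P_Mu(t)}$ obeys the same trilinear energy identity with $v$ in two of the three slots, and the same dyadic summation applies. If you insist on the $T_\sigma$ formulation you must supplement your commutator bound with the weighted term $M_1^{3}M^{-3}p(m_1,n_1)$ and redo the derivative bookkeeping to check that it still sums; the splitting above is the cleaner route and is what the paper does.
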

\begin{proof}
From the definition of the $\B^{\sigma}$ norm and the weight $p$, we have the first estimate
\begin{multline*}
\norme{\Bl^{\sigma}(T)}{u}^2 \\
\lesssim \sum_{M_3> 1}\sup_{t_{M_3}\in[-T;T]}\left(M_3^{2\sigma}\normL{2}{P_{M_3}u(t_{M_3})}+M_3^{2(\sigma-2)}\normL{2}{\drx^{-1}\dry P_{M_3}u(t_{M_3})}\right)
\end{multline*}
For the first term within the sum, using that $u$ is a solution to (\ref{equation KP1-5}), we have
\begin{multline*}
\sup_{t_{M_3}\in[-T;T]}M_3^{2\sigma}\normL{2}{P_{M_3}u(t_{M_3})}\lesssim M_3^{2\sigma}\normL{2}{P_{M_3}u_0}\\+M_3^{2\sigma}\left|\int_{[0;T]\times\T^2}P_{M_3}u\cdot P_{M_3}(u\drx u)\dt\dx\dy\right|
\end{multline*}
We can divide the previous integral term into
\begin{multline*}
\sum_{M_1\infeg M_3/16}M_3^{2\sigma}\left|\int_{[0;T]\times\T^2}P_{M_3}u\cdot P_{M_3}(P_{M_1}u\cdot\drx u)\dt\dx\dy\right|\\
+\sum_{M_1\gtrsim M_3}\sum_{M_2\supeg 1}M_3^{2\sigma}\left|\int_{[0;T]\times\T^2}P_{M_3}^2u\cdot P_{M_1}u\cdot\drx P_{M_2} u\dt\dx\dy\right|
\end{multline*}
Using (\ref{estimation terme specifique estimation energie}) for the first one and (\ref{estimation energie forme trilineaire}) for the second one, we get the bound
\begin{multline*}
\sum_{M_1\infeg M_3/16}M_3^{2\sigma}M_1^{3/2}\norme{\overline{\fl_{M_1}}(T)}{P_{M_1}u}\sum_{M_2\sim M_3}\norme{\overline{\fl_{M_2}}(T)}{P_{M_2}u}^2\\
+\sum_{M_1\gtrsim M_3}\sum_{M_2\supeg 1}M_3^{2\sigma}M_2(M_2\et M_3)^{1/2}\\
\cdot\norme{\overline{\fl_{M_1}}(T)}{P_{M_1}u}\norme{\overline{\fl_{M_2}}(T)}{P_{M_2}u}\norme{\overline{\fl_{M_3}}(T)}{P_{M_3}u}
\end{multline*}
For the sum on the first line, we use Cauchy-Schwarz inequality to sum on $M_1$ (as we have 1/2 derivative to spare) and then sum on $M_3$ by writing $M_2 = 2^k M_3$ with $k\in\Z$ bounded and then a use of Cauchy-Schwarz inequality in $M_3$.\\
For the second line, we cut the sum into two parts $M_2 \gtrsim M_3\sim M_1$ and $M_2\sim M_1\gtrsim M_3$, put $2\sigma$ derivatives on the highest frequency, and then use Cauchy-Schwarz again to sum on the lowest frequency (we have again 1/2 extra derivative) and then the biggest. Thus the term above is bounded by the right-hand side of (\ref{estimation energie globale}).

It remains to treat the sum with the antiderivative. Proceeding similarly and writing $v:=\drx^{-1}\dry u$, we get
\begin{multline*}
\sup_{t_{M_3}\in[-T;T]}M_3^{2(\sigma-2)}\normL{2}{\drx^{-1}\dry P_{M_3}u(t_{M_3})}\lesssim M_3^{2(\sigma-2)}\normL{2}{\drx^{-1}\dry P_{M_3}u_0}\\+M_3^{2(\sigma-2)}\left|\int_{[0;T]\times\T^2}P_{M_3}v\cdot P_{M_3}(u\drx v)\dt\dx\dy\right|
\end{multline*}
which is analogously dominated by
\begin{multline*}
\sum_{M_1\infeg M_3/16}M_3^{2(\sigma-2)}M_1^{3/2}\norme{\overline{\fl_{M_1}}(T)}{P_{M_1}u}\sum_{M_2\sim M_3}\norme{\overline{\fl_{M_2}}(T)}{P_{M_2}v}^2\\
+\sum_{M_1\gtrsim M_3}\sum_{M_2\supeg 1}M_3^{2(\sigma-2)}M_2(M_2\et M_3)^{1/2}\\
\cdot\norme{\overline{\fl_{M_1}}(T)}{P_{M_1}u}\norme{\overline{\fl_{M_2}}(T)}{P_{M_2}v}\norme{\overline{\fl_{M_3}}(T)}{P_{M_3}v}
\end{multline*}
For the first line, we run the summation over $M_1,M_2,M_3$ as before, whereas for the second line, we split the highest frequency into $M_1^2(M_2\ou M_3)^{2(\sigma-3)}(M_2\et M_3)^{3/2}$ and then perform the summation as above.
\end{proof}
\begin{remarque}
In the dyadic summations above, we see that we are 1/2-derivative below the energy space, thus a simple adaptation of our argument would actually yield local well-posedness in $H^{s_1,s_2}(\T^2)$ with $s_1>3/2, s_2\supeg 0$. For our result to be more readable, we chose not to present these technical details here.
\end{remarque}
\begin{remarque}
Even with the local well-posedness result mentioned above, our result is in sharp contrast with the local well-posedness of \cite{Guo2017} in the case of $\R^2$. This highilights the quasilinear behaviour of equation (\ref{equation KP1-5}) in the periodic setting. From the technical point of view, the $X_M$ structure is used in \cite{Guo2017} on time intervals on size $M^{-1}$, whereas in our case, the use of the counting measure instead of the Lebesgue measure in the localized bilinear Strichartz estimates requires us to work on time intervals of size $M^{-2}$ which explains the gap in regularity between these results.
\end{remarque}
To deal with the difference of solutions, we also prove the following proposition.
\begin{proposition}\label{proposition estimation energie difference}
Assume $T\in ]0;1[$ and $u,v\in\mathcal{C}([-T,T],\El^{\infty})$ are solutions to (\ref{equation KP1-5}) on $[-T,T]$ with initial data $u_0,v_0\in\E^{\infty}$. Then
\begin{equation}\label{estimation energie difference L2}
\norme{\overline{\Bl}(T)}{u-v}^2\lesssim \norme{L^2}{u_0-v_0}^2 + T^{\mu_0}\norme{\Fl(T)}{u+v}\norme{\overline{\Fl}(T)}{u-v}^2
\end{equation}
and
\begin{equation}\label{estimation energie difference globale}
\norme{\Bl(T)}{u-v}^2\lesssim \norme{\El}{u_0-v_0}^2 + T^{\mu_0}\norme{\Fl^3(T)}{v}\norme{\Fl(T)}{u-v}^2
\end{equation}
\end{proposition}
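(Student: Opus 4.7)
The plan is to adapt the proof of Proposition~\ref{proposition estimation energie} to the equation satisfied by $w := u - v$. Subtracting~(\ref{equation KP1-5}) for $v$ from the same equation for $u$, one finds
\begin{equation*}
\drt w - \drx^5 w - \drx^{-1}\dry^2 w + \tfrac{1}{2}\drx\bigl((u+v)w\bigr) = 0.
\end{equation*}
Throughout, write $L := \drt - \drx^5 - \drx^{-1}\dry^2$ for the (skew-adjoint) linear part.

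For~(\ref{estimation energie difference L2}), I would project this equation with $P_{M_3}$, multiply by $P_{M_3} w$, and integrate over $[0,t]\times \T^2$ for $t\in [-T,T]$. Skew-adjointness of $L$ kills the linear contribution, so that $\sup_{t}\norme{L^2}{P_{M_3}w(t)}^2$ is bounded by $\norme{L^2}{P_{M_3}w_0}^2$ plus a nonlinear piece of the form
\begin{equation*}
\sum_{M_1,M_2}\Bigl|\int_{[0,t]\times\T^2} P_{M_3}w\cdot P_{M_3}\drx\bigl(P_{M_1}(u+v)\cdot P_{M_2}w\bigr)\dt\dx\dy\Bigr|.
\end{equation*}
I would then split into the high--low regime $M_1 \infeg M_3/16$, controlled by~(\ref{estimation terme specifique estimation energie}) with $P_{M_1}v$ replaced by $P_{M_1}(u+v)\in \Fl_{M_1}(T)$ and the two copies of $w$ playing the role of $u\in \overline{\fl}_{M_i}(T)$, and the regime $M_1 \gtrsim M_3$, controlled by~(\ref{estimation energie forme trilineaire}). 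The dyadic summation then proceeds exactly as in the proof of Proposition~\ref{proposition estimation energie}, using the half-derivative slack to apply Cauchy--Schwarz on the smallest frequency, producing~(\ref{estimation energie difference L2}).

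For~(\ref{estimation energie difference globale}), I would additionally control the weighted piece $\norme{L^2}{\drx^{-1}\dry P_{M_3}w(t)}^2$ of $\norme{\Bl(T)}{w}^2$ by applying $\drx^{-1}\dry$ to the difference equation and running the analogous computation on $\tilde w := \drx^{-1}\dry w$, while the top-order piece $M_3^4\norme{L^2}{P_{M_3}w(t)}^2$ is treated as in~(\ref{estimation energie difference L2}) but with an extra factor $M_3^4$ in the dyadic bookkeeping. In the high--high regime $M_1 \gtrsim M_3$, distributing these four additional derivatives among the three dyadic pieces forces us to place at least one extra derivative above energy on a non-difference factor. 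Rewriting the nonlinearity asymmetrically as $u\drx u - v\drx v = u\drx w + (\drx v)\,w$, I would load this extra regularity onto $v$ through its $\Fl^3(T)$-norm; the remaining piece $u\drx w$ splits further as $v\drx w + w\drx w$, where the first summand again draws on $\Fl^3(T)$ of $v$ and the second is purely quadratic in $w$ and thus estimated as in~(\ref{estimation energie difference L2}).

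The main obstacle is precisely this high--high interaction at the full energy regularity: unlike in Proposition~\ref{proposition estimation energie}, the difference $w$ is not assumed smoother than energy, so the $M_3^4$ weight cannot be absorbed onto two factors of $w$ simultaneously, and the asymmetric decomposition above is what enables the summation to close at the cost of requiring $v\in \Fl^3(T)$. Apart from this redistribution of derivatives, the Cauchy--Schwarz summations on the dyadic parameters follow exactly the pattern of the proof of Proposition~\ref{proposition estimation energie} and present no new difficulty.
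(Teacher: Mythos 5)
Your proposal follows essentially the same route as the paper: the same difference equation, the same expansion of the nonlinearity into $w\drx w + w\drx v + v\drx w$, the same appeal to (\ref{estimation energie forme trilineaire}) and (\ref{estimation terme specifique estimation energie}), and the same device of loading the surplus derivatives onto $v$ via its $\Fl^3(T)$-norm. One small correction to your narrative: the regime that actually forces $\sigma=3$ on $v$ in (\ref{estimation energie difference globale}) is not the high--high interaction $M_1\gtrsim M_3$ (there the output frequency $M_3$ carries the weight $M_3^4$ but is the lowest, so the summation closes easily) but rather the term $P_{M_1}w\cdot\drx P_{M_2}v$ with $M_1\ll M_2\sim M_3$, where $M_3^4\cdot M_2$ amounts to five derivatives at the top frequency, of which only two can be absorbed by $P_{M_3}w$ through its $\Fl(T)$-norm, leaving three for $v$.
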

\begin{proof}
We proceed as in the previous proposition, except that now $w:=u-v$ solves the equation
\begin{equation}\label{equation difference}
\begin{cases}
{\displaystyle \drt w - \drx^5w -\drx^{-1}\dry^2w +\drx\left(w\frac{u+v}{2}\right)=0}\\
w(t=0)=u_0-v_0
\end{cases}
\end{equation}
For (\ref{estimation energie difference L2}), we write
\begin{multline*}
\norme{\overline{\Bl}(T)}{u-v}^2=\sum_{M_3\supeg 1}\sup_{t_{M_3}\in\R}\normL{2}{P_{M_3}(u-v)(t_{M_3})}^2
\lesssim \sum_{M_3\supeg 1}\left\{\normL{2}{P_{M_3}(u_0-v_0)}^2\right.\\
\left.+\left|\int_{[0;T]\times\T^2}P_{M_3}w\cdot P_{M_3}\left(w\drx w + w\drx v +v\drx w\right)\dt\dx\dy\right|\right\}
\end{multline*}
The first integral term with $w\drx w$ can be estimated by $\norme{\Fl(T)}{w}\norme{\overline{\Fl}}{w}^2$ the exact same way as the first term in the previous proposition with $\sigma=0$.

As in \cite{Zhang2015}, for the other two terms, we use again (\ref{estimation energie forme trilineaire}) and (\ref{estimation terme specifique estimation energie}) to bound them with
\begin{multline*}
T^{\mu_0}\left\{\sum_{M_3\supeg 1}\sum_{M_1\infeg M_3/16}\sum_{M_2\sim M_3}\left(M_2M_1^{1/2}\Pi_1 + M_1^{3/2}\Pi_2\right)\right.\\
+ \sum_{M_3\supeg 1}\sum_{M_1\gtrsim M_3}\sum_{M_2\sim M_1}M_2M_3^{1/2}\left(\Pi_1+\Pi_2\right)\\
\left. + \sum_{M_3\supeg 1}\sum_{M_1\sim M_3}\sum_{M_2\lesssim M_3}M_2^{3/2}\left(\Pi_1+\Pi_2\right)\right\}
\end{multline*}
where we have noted
\[\Pi_1 := \norme{F_{M_3}}{w}\norme{F_{M_1}}{w}\norme{F_{M_2}}{v}\text{ and }\Pi_2 := \norme{F_{M_3}}{w}\norme{F_{M_1}}{v}\norme{F_{M_2}}{w}\]
Observe that, as for Proposition~\ref{proposition estimation energie} above, we have 1/2 derivative to spare. Moreover, using the relation between the $M_i$'s, we can always place all $(3/2)+$ derivatives on the term containing $v$, thus we can sum by using Cauchy-Schwarz to bound all these terms with the right-hand side of (\ref{estimation energie difference L2}).

By the same token as for (\ref{estimation energie globale}), we can estimate the left-hand side of (\ref{estimation energie difference globale}) by ${\displaystyle \norme{\E}{u_0-v_0}^2}$ plus two integral terms
\begin{equation}\label{definition terme 1 estimation energie difference}
\sum_{M_3\supeg 1}M_3^4\left|\int_{[0;T]\times\T^2}P_{M_3}w\cdot P_{M_3}\left(w\drx w + w\drx v +v\drx w\right)\dt\dx\dy\right|
\end{equation}
and
\begin{equation}\label{definition terme 2 estimation energie difference}
\sum_{M_3\supeg 1}\left|\int_{[0;T]\times\T^2}P_{M_3}W\cdot P_{M_3}\left(w\drx W + w\drx V +v\drx W\right)\dt\dx\dy\right|
\end{equation}
where in the latter ${\displaystyle W:=\drx^{-1}\dry w}$ and ${\displaystyle V:=\drx^{-1}\dry v}$.

For (\ref{definition terme 1 estimation energie difference}), we proceed exactly as previously. Again, the first integral term has already been treated in the proof of (\ref{estimation energie globale}). For the other terms, now we have $11/2$ derivatives to distribute, and using again the relation between the $M_i$'s we can place $2^-$ derivatives on each $w$ and the remaining ones on $v$ and then run the summations, the worst case being the term $P_{M_1}w\cdot\drx P_{M_2}v$ in the regime $M_1\ll M_2\sim M_3$ since there are 5 highest derivatives, thus we need to put 3 on $v$.

It remains to treat (\ref{definition terme 2 estimation energie difference}). Once again, the first term within the integral appeared in the proof of the previous proposition, thus we only need to deal with the last two terms. We proceed the same way as above, since there are $3/2$ derivatives to share, and $w$ can absorb 2, $V$ can absorb 1 and $v$ can absorb 3. 
\end{proof}
\section{Short-time bilinear estimates}\label{section estimation bilineaire}
The aim of this section is to prove the bilinear estimates for both the equation and the difference equation. We mainly adapt \cite{IonescuKenigTataru2008}.
\begin{proposition}\label{propositioin estimation bilineaire equation}
There exists $\mu_1>0$ small enough such that for any $T\in]0;1]$ and $\sigma\supeg 2$ and $u,v\in \Fl^{\sigma}(T)$, 
\begin{equation}\label{estimation bilineaire globale}
\norme{\Nl^{\sigma}(T)}{\drx(uv)}\lesssim T^{\mu_1}\left\{\norme{\Fl^{\sigma}(T)}{u}\norme{\Fl(T)}{v}+\norme{\Fl(T)}{u}\norme{\Fl^{\sigma}(T)}{v}\right\}
\end{equation}
\end{proposition}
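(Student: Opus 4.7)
The plan is to follow the short-time bilinear scheme of \cite{IonescuKenigTataru2008} and reduce the claim to a family of dyadic bilinear estimates. I decompose $u=\sum_{M_1}P_{M_1}u$ and $v=\sum_{M_2}P_{M_2}v$, and localize the output of $\drx(uv)$ at frequency $M_3$. Since $P_{M_3}(P_{M_1}u\cdot P_{M_2}v)\neq 0$ forces two of $M_1,M_2,M_3$ to be comparable and at least as large as the third, only admissible triples $M_{max}\sim M_{med}$ contribute. It then suffices to prove, for each such triple, a bound of the form
\begin{equation*}
\norme{N_{M_3}}{P_{M_3}\drx(P_{M_1}u\cdot P_{M_2}v)}\lesssim T^{\mu_1}\,C(M_1,M_2,M_3)\,\norme{F_{M_1}}{P_{M_1}u}\cdot\norme{F_{M_2}}{P_{M_2}v}
\end{equation*}
where $C$, once the $\sigma$-weights and the $M_3$ coming from $\drx$ are redistributed, is summable over admissible triples, so that the $\Nl^{\sigma}(T)$-to-$\Fl^{\sigma}(T)$ inequality closes by Cauchy--Schwarz.

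Unpacking the $N_{M_3}$ norm via its definition, I fix $t_{M_3}\in\R$ and use a smooth partition of unity in time at scale $M_{max}^{-2}$ to insert cutoffs $\gamma(M_{max}^{2}(t-\nu M_{max}^{-2}))$ on each factor, so that each time-localized $P_{M_i}u$ or $P_{M_2}v$ is represented in the $X_{M_i}$ structure through (\ref{propriete XM1}). Writing $f_i$ for the modulation-localized Fourier transform of the resulting piece, with $K_i\supeg M_{max}^{2}$, the contribution to the $N_{M_3}$ norm is controlled by
\begin{equation*}
\sum_{K_1,K_2,K_3\supeg M_{max}^{2}}K_3^{1/2}(K_3+M_3^{2})^{-1}\norme{L^2}{\mathbb{1}_{D_{M_3,K_3}}\cdot(p\cdot f_1)\star f_2}
\end{equation*}
to which Corollary~\ref{corollaire synthese} applies. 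The factor $(K_3+M_3^{2})^{-1}$ combined with $(K_{min}K_{max})^{1/2}K_{med}^{b}$ from (\ref{estimation trilineaire synthese}) makes the triple modulation sum convergent for any $b<1/2$, while the remaining $(1/2-b)-$ power is converted into the gain $T^{\mu_1}$ via Lemma~\ref{lemme extraction T} applied to each factor.

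The frequency case analysis is then standard. In the \emph{high-high to low} regime $M_1\sim M_2\gg M_3$ the derivative loss $M_3$ from $\drx$ is absorbed by the factor $M_{max}^{-1-2b}$ in (\ref{estimation trilineaire synthese}), and the $\sigma$-weight may live on either high-frequency factor. In the \emph{high-low to high} regime $M_{min}\ll M_2\sim M_3$ the $\sigma$-derivative must sit on the high-frequency factor; the delicate subcase is when the anisotropic weight $p(m,n)$ at the output is large because $|n/m|\gg\crochet{m}^{2}$, in which case one invokes (\ref{estimation dyadique poids}) to transfer $p$ onto the low-frequency factor at the price of an $M_{min}^{3/2}$ loss, which is itself compensated by a spare $M_{max}^{-}$ factor since $M_{min}\ll M_{max}$. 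The comparable regime $M_1\sim M_2\sim M_3$ is treated similarly with (\ref{estimation trilineaire synthese}). The hardest point throughout is the allocation of the weight $p(m,n)=\crochet{\crochet{m}^{-2}n/m}$, which is \emph{not} sub-multiplicative under convolution, in a manner compatible both with the modulation balance and with the resonance structure $K_{max}\gtrsim|\Omega|$ of (\ref{definition fonction resonnance}); (\ref{estimation dyadique poids}) together with the short-time window $M_{max}^{-2}$ forced by (\ref{estimation basse modulation}) (cf.\ Remark~\ref{remarque gap}) are precisely the devices designed for this, and the remainder is routine bookkeeping in the spirit of \cite[Section~7]{IonescuKenigTataru2008}.
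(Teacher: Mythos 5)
Your overall architecture matches the paper's: dyadic decomposition in $M_1,M_2,M_3$, time localization at scale $M_{max}^{-2}$, reduction via (\ref{propriete XM1}) and Lemma~\ref{lemme extraction T} to a fixed-modulation estimate of the type (\ref{estimation bilineaire cle}), and then a case analysis resolved with Corollary~\ref{corollaire synthese}. But the step you yourself identify as the hardest one --- the allocation of the non-submultiplicative weight $p$ --- is where your argument has a genuine gap. First, the quantity you need to control is $\mathbb{1}_{D_{M_3,\infeg K_3}}\cdot p\cdot (f_1\star f_2)$ with $p$ evaluated at the \emph{output} frequency; writing $(p\cdot f_1)\star f_2$ from the start presupposes exactly the transfer that has to be proved. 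Second, your proposed mechanism for paying the $M_{min}^{3/2}$ loss of (\ref{estimation dyadique poids}) --- ``a spare $M_{max}^{-}$ factor'' --- cannot work: $M_{min}$ may be a fixed positive power of $M_{max}$ (e.g.\ $M_1\sim M_3^{1/2}$ in the low$\times$high case), so no $M_{max}^{-\varepsilon}$ absorbs $M_{min}^{3/2}$.

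What the paper actually does, and what is missing from your sketch, is a splitting of the output-modulation sum into two regimes. For large $K_3$ (e.g.\ $K_3\supeg M_3^2M_1^3$ in the low$\times$high case), the tail $\sum K_3^{-1/2}\lesssim M_3^{-1}M_1^{-3/2}$ supplies precisely the $M_1^{-3/2}$ needed to cancel the loss in (\ref{estimation dyadique poids}), used together with the crude weight bound (\ref{estimation bilineaire lhh poids}). For small $K_3$ (only $O(\ln M_1)$ dyadic values, whence the logarithm in (\ref{estimation bilineaire lhh cle})), one cannot afford (\ref{estimation dyadique poids}) at all; instead the paper exploits the resonance identity (\ref{definition fonction resonnance}) to prove the refined pointwise bounds (\ref{estimation bilineaire lhh poids 2}) and (\ref{estimation bilineaire hhl poids 2}), which dominate the output weight by $p(m_2,n_2)$ plus a term proportional to $K_{max}^{1/2}$, and that extra half power of modulation is then absorbed by switching between (\ref{estimation basse modulation}) and (\ref{estimation Strichartz grossiere}) according to whether $K_{max}\infeg 10^{-10}M_1M_2M_3M_{max}^2$ or not. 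Without this modulation dichotomy and the resonance-based weight bounds, your estimate does not close in the small-modulation regime, which is the regime responsible for the quasilinear behaviour of the equation.
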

\begin{proof}
Using the definition of $\Fl^{\sigma}(T)$ (\ref{definition F}) and $\Nl^{\sigma}(T)$ (\ref{definition N}), the left-hand side of (\ref{estimation bilineaire globale}) is bounded by
\[\sum_{M_1,M_2,M_3}M_3^{\sigma}\norme{\nl_{M_3}(T)}{P_{M_3}\drx\left(P_{M_1}u\cdot P_{M_2}v\right)}\]
For $M_1,M_2\in 2^{\N}$, let us choose extensions $u_{M_1}$ and $v_{M_2}$ of $P_{M_1}u$ and $P_{M_2}v$ to $\R$ satisfying ${\displaystyle \norme{\fl_{M_1}}{u_{M_1}}\infeg 2\norme{\fl_{M_1}(T)}{P_{M_1}u}}$ and similarly for $v_{M_2}$. Since the previous term is symmetrical with respect to $u$ and $v$, we can assume $M_1\infeg M_2$. 

To treat the term above, from the definition of the $\fl_{M}^b$ and $\nl_{M}$ norms, the property of the space $X_M$ (\ref{propriete XM1}) and the use of Lemma~\ref{lemme extraction T}, it suffices to show that there exists $b\in [0;1/2)$ such that for all $K_i\supeg M_i^2$, $i=1,2,3$ and $f_i^{K_i}\in L^2(\R\times\Z^2)$ with $\supp f_i^{K_i} \subset D_{M_i,\infeg K_i}$, $i=1,2$ then
\begin{multline}\label{estimation bilineaire cle}
\frac{M_{max}^2}{M_3^2}\cdot M_3^{\sigma+1}\sum_{K_3\supeg M_3^2}K_3^{-1/2}\normL{2}{\mathbb{1}_{D_{M_3,\infeg K_3}} \cdot p \cdot f_1^{K_1}\star f_2^{K_2}}\\ \lesssim M_1^2M_2^{\sigma}(K_1\et K_2)^b(K_1\ou K_2)^{1/2}\normL{2}{p\cdot f_1^{K_1}}\normL{2}{p\cdot f_2^{K_2}}
\end{multline}
Indeed, for a smooth partition of unity $\gamma : \R\rightarrow [0;1]$ satisfying $\supp \gamma \subset [-1;1]$ and for all $t\in\R$
\[\sum_{\nu\in\Z}\gamma(t-\nu)^2=1\]
then define for $|\nu|\lesssim M_{max}^2M_3^{-2}$
\[f_{1,\nu}^{K_1} := \rho_{K_1}(\tau+\omega)\cdot\F\left\{\gamma\left(M_{max}^2M_3^{-2}t-\nu\right)u_{M_1}\right\}\]
with $\rho_{K_1}$ a non-homogeneous dyadic decomposition of unity partitioned at $K_1 = M_1^2$, and similarly for $f_{2,\nu}^{K_2}$. Then the norm within the sum is bounded by the left-hand side of (\ref{estimation bilineaire cle}) (after taking the supremum over $\nu$), wherease summing on $K_i\supeg M_i^2$, $i=1,2$, using (\ref{propriete XM1}) and Lemma~\ref{lemme extraction T} and summing on $M_1,M_2$ then the right-hand side of (\ref{estimation bilineaire cle}) is controled by the right-hand side of (\ref{estimation bilineaire globale}) (see e.g \cite{Article1} for the full details).

We then separate two cases depending on the relation between the $M_i$'s.\\
\textbf{Case A : Low $\times$ High $\rightarrow$ High.}\\ 
We assume $M_1\lesssim M_2\sim M_3$. In that case, for (\ref{estimation bilineaire cle}) it is sufficient to prove
\begin{multline}\label{estimation bilineaire lhh cle}
M_3\sum_{K_3\supeg M_3^2}K_3^{-1/2}\normL{2}{\mathbb{1}_{D_{M_3,\infeg K_3}} \cdot p \cdot f_1^{K_1}\star f_2^{K_2}}\\ \lesssim \ln(M_{min}) M_{min}^{1/2}M_{max}^{-2b}(K_1\et K_2)^b(K_1\ou K_2)^{1/2}\normL{2}{p\cdot f_1^{K_1}}\normL{2}{p\cdot f_2^{K_2}}
\end{multline}
for a $b\in[1/4;1/2)$.

Since $K_i\supeg M_i^2$, $i=1,2$, then for the large modulations, we combine (\ref{estimation dyadique poids})(for both $f_1$ and $f_2$) with the obvious bound
\begin{equation}\label{estimation bilineaire lhh poids}
p(m_1+m_2,n_1+n_2) \lesssim M_1^3M_3^{-3}p(m_1,n_1) + p(m_2,n_2)
\end{equation}
to bound the sum for $K_3\supeg M_3^2M_1^3$ by
\[M_1^{1/2}M_3^{-2b}(K_1\et K_2)^{1/2}(K_1\ou K_2)^b\normL{2}{p\cdot f_1^{K_1}}\normL{2}{p\cdot f_2^{K_2}}\]
for any $b\in [0;1/2]$.

For the small modulations $M_3^2\infeg K_3\infeg M_3^2M_1^3$, the sum runs over about $\ln(M_1)$ dyadic integers. Moreover, using the definition of $\Omega$ (\ref{definition fonction resonnance}), we can replace (\ref{estimation bilineaire lhh poids}) with
\begin{equation}\label{estimation bilineaire lhh poids 2}
p(m_1+m_2,n_1+n_2)\lesssim p(m_2,n_2)+M_1^{1/2}M_3^{-3}K_{max}^{1/2}
\end{equation}
Indeed, this follows from the definition of $\Omega$ which implies
\[\frac{|n|}{|m|}\lesssim \frac{|n_2|}{|m_2|}+\left(\frac{|m_1|}{|m_2m|}|\Omega| + 5m_1^2\alpha(m,m_2)\right)^{1/2}\]
In the case $K_{max}\infeg 10^{-10}M_1M_2M_3M_{max}^2$, we then use (\ref{estimation bilineaire lhh poids 2}), use the bound on $K_{max}$ and then use (\ref{estimation basse modulation}) to get the estimate
\begin{multline*}
\normL{2}{\mathbb{1}_{D_{M_3,\infeg K_3}} \cdot p \cdot f_1^{K_1}\star f_2^{K_2}}\lesssim \normL{2}{\mathbb{1}_{D_{M_3,\infeg K_3}} \cdot (p \cdot f_1^{K_1})\star (p\cdot f_2^{K_2})}\\
\lesssim (K_{min}K_{max})^{1/2}K_{med}^b M_{min}^{1/2}M_{max}^{-1-2b}\normL{2}{p\cdot f_1^{K_1}}\normL{2}{p\cdot f_2^{K_2}}
\end{multline*}
for any $b\in [0;1/2]$. Thus the sum in this regime is estimated with
\[\ln(M_1)M_{min}^{1/2}M_{max}^{-2b}(K_1\et K_2)^b(K_1\ou K_2)^{1/2}\normL{2}{p\cdot f_1^{K_1}}\normL{2}{p\cdot f_2^{K_2}}\] which suffices for (\ref{estimation bilineaire lhh cle}).\\
In the regime $K_{max}\gtrsim M_1M_2M_3M_{max}^2$, we apply again (\ref{estimation bilineaire lhh poids 2}), loose a factor $K_{max}^{1/2}$ in the first term, and then use (\ref{estimation Strichartz grossiere}) instead of (\ref{estimation basse modulation}) to obtain
\begin{multline*}
\normL{2}{\mathbb{1}_{D_{M_3,\infeg K_3}} \cdot p \cdot f_1^{K_1}\star f_2^{K_2}}\\
\lesssim K_{max}^{1/2}M_{min}^{-1/2}M_{max}^{-2}\normL{2}{\mathbb{1}_{D_{M_3,\infeg K_3}} \cdot (p \cdot f_1^{K_1})\star (p\cdot f_2^{K_2})}\\
\lesssim (K_{min}K_{max})^{1/2}K_{med}^b M_{max}^{-5/4-2b}\normL{2}{p\cdot f_1^{K_1}}\normL{2}{p\cdot f_2^{K_2}}
\end{multline*}
for any $b\in[1/4;1/2]$ which is controled by the estimate in the previous regime.\\
\textbf{Case B : High $\times$ High $\rightarrow$ Low.}\\
We assume now $M_1\sim M_2 \gtrsim M_3$. (\ref{estimation bilineaire lhh cle}) becomes in this case
\begin{multline}\label{estimation bilineaire hhl cle}
M_{max}^2M_3\sum_{K_3\supeg M_3^2}K_3^{-1/2}\normL{2}{\mathbb{1}_{D_{M_3,\infeg K_3}}\cdot p \cdot f_1^{K_1}\star f_2^{K_2}}\\
\lesssim \ln(M_{max})M_{max}^{3-2b}M_{min}^{-1/2}(K_1\et K_2)^b(K_1\ou K_2)^{1/2}\normL{2}{p\cdot f_1^{K_1}}\normL{2}{p\cdot f_2^{K_2}}
\end{multline}
For the high modulations $K_3\supeg M_{min}^{-2} M_{max}^{6}$, we use (\ref{estimation Strichartz grossiere}) along with the obvious bound
\begin{equation}\label{estimation bilineaire hhl poids}
p(m_1+m_2,n_1+n_2) \lesssim \frac{M_{max}^3}{M_{min}^3}\left(p(m_1,n_1)+p(m_2,n_2)\right)
\end{equation}
to estimate the left-hand side of (\ref{estimation bilineaire hhl cle}) with
\[M_{max}^2M_{min}^{-1/2}(K_1\et K_2)^{1/2}(K_1\ou K_2)^b \normL{2}{p\cdot f_1^{K_1}}\normL{2}{p\cdot f_2^{K_2}}
\]
for any $b\in [3/8;1/2]$.

In the regime $M_3^2 \infeg K_3 \infeg M_3^{-2}M_2^{6}$ we replace (\ref{estimation bilineaire hhl poids}) with
\begin{equation}\label{estimation bilineaire hhl poids 2}
p(m_1+m_2,n_1+n_2) \lesssim M_{min}^{-2}M_{max}^2p(m_1,n_1)+M_{min}^{-5/2}K_{max}^{1/2}
\end{equation}
Indeed, this follows from the same argument as for (\ref{estimation bilineaire lhh poids 2}). Proceeding then as in the previous case, we infer the final bound
\[\ln(M_{max})M_{max}^{3-2b}M_{min}^{-1/2}(K_1\et K_2)^{1/2}(K_1\ou K_2)^b\normL{2}{p\cdot f_1^{K_2}}\normL{2}{p\cdot f_2^{K_2}}\]
\end{proof}
The end of this section is devoted to the short-time bilinear estimate for the difference equation.
\begin{proposition}
There exists $\mu_2>0$ small enough such that for any $T\in]0;1]$ and $u\in \overline{\Fl}(T)$, $v\in \Fl(T)$, 
\begin{equation}\label{estimation bilineaire difference globale}
\norme{\overline{\Nl}(T)}{\drx(uv)}\lesssim T^{\mu_2}\norme{\overline{\Fl}(T)}{u}\norme{\Fl(T)}{v}
\end{equation}
\end{proposition}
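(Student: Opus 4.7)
The proof follows the template of Proposition~\ref{propositioin estimation bilineaire equation} closely, and is in fact somewhat simpler because the target norm $\overline{\Nl}(T)$ does not carry the weight $p$. Starting from the definitions, I would expand $u = \sum_{M_1} P_{M_1} u$ and $v = \sum_{M_2} P_{M_2} v$, localize in time on intervals of size $M_{max}^{-2}$ via a smooth partition of unity as in Lemma~\ref{lemme estimation trilineaire}, extract the $T^{\mu_2}$ factor through Lemma~\ref{lemme extraction T}, and partition the modulations using property~(\ref{propriete XM1}) of $X_M$. This reduces the problem to showing a dyadic bound of the form
\begin{multline*}
\frac{M_{max}^2}{M_3^2}\cdot M_3\sum_{K_3\supeg M_3^2}K_3^{-1/2}\normL{2}{\mathbb{1}_{D_{M_3,\infeg K_3}} \cdot f_1^{K_1}\star f_2^{K_2}}\\
 \lesssim M_2^2 (K_1\et K_2)^b (K_1\ou K_2)^{1/2}\normL{2}{f_1^{K_1}}\normL{2}{p\cdot f_2^{K_2}}
\end{multline*}
for some $b\in [0;1/2)$ and for $f_i$ supported in $D_{M_i,\infeg K_i}$ with $K_i\supeg M_i^2$, followed by a Cauchy-Schwarz summation in $M_1, M_2, M_3$, where the $M_2^2$ factor matches the weight coming from the definition of $\Fl(T)$.

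The key simplification with respect to Proposition~\ref{propositioin estimation bilineaire equation} is twofold: first, since $p\supeg 1$ pointwise, we trivially have $\normL{2}{f_2^{K_2}}\infeg \normL{2}{p\cdot f_2^{K_2}}$; second, since $\overline{\Nl}$ is not weighted by $p$, there is no need to control $p(m_1+m_2,n_1+n_2)$ on the output, so the weight-transfer estimates~(\ref{estimation bilineaire lhh poids}),~(\ref{estimation bilineaire lhh poids 2}) and their high-high analogues become unnecessary. Corollary~\ref{corollaire synthese}, specifically the bound~(\ref{estimation trilineaire synthese}), can thus be applied directly to produce the factor $M_{min}^{1/2}M_{max}^{-1-2b}(K_{min}K_{max})^{1/2}K_{med}^b$.

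I would split the analysis into the usual two regimes: Case A (Low$\times$High$\rightarrow$High, $M_1\et M_2 \ll M_1\ou M_2 \sim M_3$), and Case B (High$\times$High$\rightarrow$Low, $M_1\sim M_2 \gtrsim M_3$). In contrast to the proof of Proposition~\ref{propositioin estimation bilineaire equation}, the asymmetry between $u\in \overline{\Fl}(T)$ and $v\in \Fl(T)$ forces me to further distinguish, in Case A, whether $M_1\ll M_2$ or $M_2\ll M_1$. In each case, combining~(\ref{estimation trilineaire synthese}) with $p\supeg 1$ yields the dyadic bound, and summation over $K_3\supeg M_3^2$ produces the extra $M_3^{-1/2}$ factor needed to absorb the outer $M_3$ from $\drx$.

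The main obstacle is the dyadic summation in Case A when $M_2\ll M_1\sim M_3$, since then the $M_2^2$ weight lands on the low-frequency factor $v$ and is small. However, the $M_{max}^{-1-2b}$ gain from~(\ref{estimation trilineaire synthese}) together with the derivative room provided by $M_3\sim M_1$ leaves $1/2$-derivative to spare, so the Cauchy-Schwarz summation over $M_2$ closes in the same spirit as the energy estimates of Section~\ref{section estimation energie}; Case B is handled by the analogous argument, placing the $M_2^2$ weight on the high-frequency input and summing first in $M_3\lesssim M_1\sim M_2$ and then in $M_1$.
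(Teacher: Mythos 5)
Your reduction is exactly the paper's: the dyadic inequality you display is precisely (\ref{estimation bilineaire difference cle}) (note $\frac{M_{max}^2}{M_3^2}\cdot M_3 = M_{max}^2M_3^{-1}$), and the paper likewise separates three cases because $u$ and $v$ no longer play symmetric roles. Where you genuinely diverge is the case High$\times$Low$\rightarrow$High, $M_2=M_{min}\ll M_1\sim M_3$: the paper invokes the weighted estimate (\ref{estimation dyadique poids}), applied with the weight on the low-frequency factor so as to produce exactly $M_2^{3/2}M_{min}^{1/2}=M_2^2$, whereas you discard the weight via $p\supeg 1$ and use only the unweighted bounds. Your route does close: since the target already carries the full $M_{min}^2$ weight on $v$, estimate (\ref{estimation Strichartz grossiere}) yields, after the $K_3$-summation, a constant $M_2^{3/4}(K_1\et K_2)^{1/2}(K_1\ou K_2)^{1/4}\infeg M_2^{2}(K_1\et K_2)^{1/4}(K_1\ou K_2)^{1/2}$, with $5/4$ powers of $M_2$ to spare for the Cauchy--Schwarz summation over $M_2\ll M_3$. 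So this is a legitimate simplification; the weight-transfer machinery is only indispensable in Proposition~\ref{propositioin estimation bilineaire equation}, where the \emph{output} carries the weight $p$.

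One step is stated too loosely to stand as written: you cannot apply (\ref{estimation trilineaire synthese}) ``directly'' and then claim that ``summation over $K_3\supeg M_3^2$ produces the extra $M_3^{-1/2}$ factor''. The right-hand side of (\ref{estimation trilineaire synthese}) contains $K_{max}^{1/2}$; in the regime $K_3=K_{max}$ the summand $K_3^{-1/2}(K_{min}K_{max})^{1/2}K_{med}^{b}$ is independent of $K_3$ and the sum over dyadic $K_3$ diverges. (Also, $\sum_{K_3\supeg M_3^2}K_3^{-1/2}\lesssim M_3^{-1}$, not $M_3^{-1/2}$.) The repair is the modulation splitting already used in the proof of Proposition~\ref{propositioin estimation bilineaire equation}: for $M_3^2\infeg K_3\infeg K^*$, with $K^*$ a suitable power of the frequencies, there are only $O(\ln M_{max})$ dyadic values of $K_3$ and each term is acceptable, while for $K_3\supeg K^*$ one must revert to (\ref{estimation Strichartz grossiere}) or (\ref{estimation dyadique poids}), whose bounds carry no $K_{max}$ and are therefore summable in $K_3$. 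Since you announce that you follow the template of Proposition~\ref{propositioin estimation bilineaire equation} closely, this is a presentational rather than a fatal gap, but the $K_3$-sum must be organized in this way for the argument to be correct.
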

\begin{proof}
Similarly to (\ref{estimation bilineaire cle}), now it suffices to prove
\begin{multline}\label{estimation bilineaire difference cle}
M_{max}^2M_3^{-1}\sum_{K_3\supeg M_3^2}K_3^{-1/2}\normL{2}{\mathbb{1}_{D_{M_3,\infeg K_3}} \cdot f_1^{K_1}\star f_2^{K_2}}\\ \lesssim M_2^{2}(K_1\et K_2)^b(K_1\ou K_2)^{1/2}\normL{2}{f_1^{K_1}}\normL{2}{p\cdot f_2^{K_2}}
\end{multline}
We proceed as above, except that now $u$ and $v$ do not play a symmetric role anymore, thus we have to separate three cases. (\ref{estimation bilineaire difference cle}) then follows directly from (\ref{estimation trilineaire synthese}) in the cases $High\times High\rightarrow Low$ and $Low\times High\rightarrow High$ and from (\ref{estimation dyadique poids}) in the case $High\times Low\rightarrow High$.
\end{proof}

\section{Proof of Theorem~\ref{theoreme}}\label{section preuve}
We finally turn to the proof of our main result. 

The starting point is the local well-posedness result for smooth data of Iòrio and Nunes.
\begin{theoreme}[\cite{IorioNunes}]\label{proposition existence haute regularite}
Assume $u_0\in \E^{\infty}$. Then there exists $T=T(\norme{\E^3}{u_0})\in]0;1]$ and a unique solution $u\in\mathcal{C}([-T;T],\E^{\infty})$ of (\ref{equation KP1-5}) on $[-T;T]\times\T^2$.
\end{theoreme}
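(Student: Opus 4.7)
The plan is to prove this local well-posedness for smooth data by a parabolic/Galerkin regularization scheme combined with a priori energy estimates, which is the standard implementation of Kato's quasi-linear framework in the KP setting. The goal is to construct a classical solution on $[-T,T]$ with $T$ depending only on $\norme{\E^3}{u_0}$, then to propagate $\E^\sigma$-regularity for every $\sigma \supeg 2$ via Gronwall.

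\textbf{Step 1 (Approximation).} For $\varepsilon>0$, let $J_\varepsilon$ denote the smooth Fourier projector onto frequencies $|m|+|n| \infeg 1/\varepsilon$, restricted to the zero $x$-mean subspace so that (\ref{condition coef}) is preserved. Consider the approximate equation
\[\drt u^{\varepsilon} - \drx^5 u^{\varepsilon} - \drx^{-1}\dry^2 u^{\varepsilon} + J_{\varepsilon}(u^{\varepsilon}\drx u^{\varepsilon}) = 0,\quad u^{\varepsilon}(0) = J_{\varepsilon}u_0.\]
This is a system of ODEs in a finite-dimensional space, hence admits a unique smooth solution on some maximal interval. The task is to prove that this interval contains a fixed $[-T,T]$ independent of $\varepsilon$.

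\textbf{Step 2 (Uniform energy estimates).} Multiply the equation in Fourier by $\crochet{m}^{2\sigma}\, p(m,n)^{2}\, \overline{\widehat{u^\varepsilon}}$ and sum. The dispersive symbol $m^5 + n^2/m$ is real, so the linear part is antisymmetric and contributes nothing. The cubic term is symmetrized through integration by parts in $x$, producing a commutator between $u^\varepsilon\drx$ and the weight $\crochet{m}^{\sigma}p(m,n)$. A paraproduct-type estimate (splitting low-high, high-high and high-low frequency interactions of $u^\varepsilon$, $u^\varepsilon$ and the weight) yields
\[\frac{1}{2}\ddt\, \norme{\E^{\sigma}}{u^{\varepsilon}}^{2} \infeg C\, \norme{L^{\infty}(\T^{2})}{\drx u^{\varepsilon}}\, \norme{\E^{\sigma}}{u^{\varepsilon}}^{2}.\]
Because $u^\varepsilon$ has zero $x$-mean, anisotropic Sobolev embedding on $\T^2$ (in the spirit of \cite{Tom}) gives
\[\norme{L^{\infty}}{\drx u^{\varepsilon}} \lesssim \norme{\E^{3}}{u^{\varepsilon}}.\]
Specialized to $\sigma=3$, the inequality reads $\ddt \norme{\E^3}{u^\varepsilon} \infeg C\norme{\E^3}{u^\varepsilon}^{2}$, which integrates to a bound $\norme{\E^3}{u^\varepsilon(t)} \lesssim \norme{\E^3}{u_0}$ on $[0,T]$ with $T = T(\norme{\E^3}{u_0})>0$ independent of $\varepsilon$. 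The general-$\sigma$ estimate then closes by Gronwall, producing $\norme{\E^\sigma}{u^\varepsilon(t)} \infeg C(\sigma, T, \norme{\E^\sigma}{u_0})$ on the same interval, uniformly in $\varepsilon$.

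\textbf{Step 3 (Passage to the limit and uniqueness).} The uniform $\E^\sigma$ bounds together with the equation give uniform bounds on $\drt u^\varepsilon$ in a weaker space, so an Aubin--Lions compactness argument extracts a subsequence converging strongly in $\mathcal{C}([-T,T], \E^{\sigma-\delta})$ for any $\delta > 0$. The nonlinearity passes to the limit and the limit $u \in \mathcal{C}([-T,T], \E^\infty)$ solves (\ref{equation KP1-5}). For uniqueness, if $u, v$ are two such smooth solutions, $w := u-v$ solves the difference equation (\ref{equation difference}); pairing with $w$ in $L^2$ and using $\norme{L^\infty}{\drx u}, \norme{L^\infty}{\drx v} \lesssim \norme{\E^3}{u} + \norme{\E^3}{v}$ gives $\ddt \norme{L^2}{w}^2 \infeg C\, \norme{L^2}{w}^2$, so $w\equiv 0$ by Gronwall.

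\textbf{Main obstacle.} The delicate point is producing the commutator estimate of Step 2 in a form compatible with both the weight $p(m,n)$, which encodes the slope $n/m$ at each Fourier mode, and the nonlocal antiderivative $\drx^{-1}$ hidden in the $\drx^{-1}\dry$ component of the $\E^\sigma$-norm. The nonlinearity $u\drx u$ does not commute with $\drx^{-1}\dry$, so a careful paradifferential decomposition of the frequency interactions is needed to place derivatives on the highest mode and absorb $p$ into lower-order terms; the zero $x$-mean condition (\ref{condition coef}) is essential both to make $\drx^{-1}$ well-defined on $u^\varepsilon$ and to ensure no boundary contribution appears when integrating by parts in $x$.
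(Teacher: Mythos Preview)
The paper does not prove this statement at all: Theorem~\ref{proposition existence haute regularite} is quoted verbatim from I\'orio--Nunes \cite{IorioNunes} and used as a black box to initiate the continuity argument in Section~\ref{section preuve}. There is therefore no ``paper's own proof'' to compare against. Your sketch is precisely the Kato--type quasilinear scheme (Galerkin/mollifier approximation, commutator energy estimate, Gronwall, compactness) that I\'orio--Nunes implement in their general framework, so in spirit you are reproducing the content of the cited reference rather than diverging from the paper.

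One technical caveat: the embedding $\norme{L^\infty}{\drx u}\lesssim\norme{\E^3}{u}$ you invoke in Step~2 is borderline and in fact fails logarithmically. Writing $\norme{L^\infty}{\drx u}\infeg\sum_{m,n}|m||\widehat u|$ and applying Cauchy--Schwarz against the $\E^3$ weight $\crochet{m}^3 p(m,n)$, the remaining factor is $\sum_{m\neq 0}\sum_n m^2\crochet{m}^{-6}p(m,n)^{-2}\sim\sum_m m^{-1}$, which diverges. This is consistent with the fact that I\'orio--Nunes work in isotropic $H^s(\T^2)$ for $s>2$, and $\E^3$ does not embed there. To close your argument as written you would need either $\E^{3+\varepsilon}$ in place of $\E^3$, or a Brezis--Gallouet logarithmic refinement; the paper's stated dependence $T=T(\norme{\E^3}{u_0})$ is a mild reformulation of the cited result and is not justified in the text either. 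For the purposes of Section~\ref{section preuve} this is harmless, since only the existence of \emph{some} local smooth solution with lifespan controlled by a finite-regularity norm is needed before the short-time estimates take over.
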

\subsection{Global well-posedness for smooth data}
In view of this result and of the conservation of the energy, for Theorem~\ref{theoreme}~\ref{theoreme partie 1} it remains to prove (\ref{estimation energie avec donnee}), which will follow from the following proposition along with (\ref{estimation injection continue}).
\begin{proposition}\label{proposition controle solutions regulieres}
Let $\sigma\supeg 2$. For any $R>0$, there exists a positive $T=T(R)\sim R^{-1/(\mu_0\ou \mu_1)}$ such that for any $u_0\in\E^{\infty}$ with ${\displaystyle \norme{\E}{u_0}\infeg R}$, the corresponding solution $u\in\mathcal{C}([-T;T],\E^{\infty})$ satisfies
\begin{equation}\label{estimation controle norme avec donnee}
\norme{\Fl^{\sigma}(T)}{u}\infeg C_{\sigma} \norme{\E^{\sigma}}{u_0}
\end{equation}
\end{proposition}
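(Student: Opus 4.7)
The plan is a standard bootstrap argument combining the linear estimate (\ref{equation estimation linéaire}), the bilinear estimate (\ref{estimation bilineaire globale}), and the energy estimate (\ref{estimation energie globale}). Given a smooth solution $u\in\mathcal{C}([-T,T],\E^{\infty})$ produced by Theorem~\ref{proposition existence haute regularite} (possibly on a smaller time interval depending on higher Sobolev norms), I will close an \emph{a priori} estimate on $\norme{\Fl^{\sigma}(T)}{u}$.

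First, combining (\ref{equation estimation linéaire}) applied to $f=-u\drx u = -\tfrac{1}{2}\drx(u^2)$ with the bilinear estimate (\ref{estimation bilineaire globale}) and the energy estimate (\ref{estimation energie globale}), we obtain the key inequality
\[
\norme{\Fl^{\sigma}(T)}{u}^2 \lesssim \norme{\E^{\sigma}}{u_0}^2 + T^{\mu_0\ou\mu_1}\left(\norme{\Fl(T)}{u}+\norme{\Fl(T)}{u}^2\right)\norme{\Fl^{\sigma}(T)}{u}^2
\]
valid for every $\sigma\supeg 2$. Specializing to $\sigma=2$ yields a self-contained inequality for $\norme{\Fl(T)}{u}$, namely
\[
\norme{\Fl(T)}{u}^2 \infeg C_0\norme{\E}{u_0}^2 + C_0 T^{\mu_0\ou\mu_1}\left(\norme{\Fl(T)}{u}+\norme{\Fl(T)}{u}^2\right)\norme{\Fl(T)}{u}^2.
\]

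The second step is the bootstrap. Define $\phi(T'):=\norme{\Fl(T')}{u}$ for $T'\in]0;T]$. Since $u$ is smooth, $\phi$ is continuous and non-decreasing, and by a standard short-time argument $\phi(T')\to 0$ (or at worst to a quantity bounded by $\norme{\E}{u_0}$ times a universal constant) as $T'\to 0^+$. Choose $T=T(R)\sim R^{-1/(\mu_0\ou\mu_1)}$ so that $C_0 T^{\mu_0\ou\mu_1}(CR+C^2R^2)\infeg 1/2$ for the continuity constant $C$ from the target bound $\phi(T')\infeg 2\sqrt{C_0}R$. The usual continuity argument then propagates the bound $\phi(T')\infeg 2\sqrt{C_0}R$ from small $T'$ up to $T'=T$, so that $\norme{\Fl(T)}{u}\lesssim R$.

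Finally, inserting this bound back into the inequality at general $\sigma\supeg 2$ gives
\[
\norme{\Fl^{\sigma}(T)}{u}^2 \infeg C_\sigma\norme{\E^{\sigma}}{u_0}^2 + C_\sigma T^{\mu_0\ou\mu_1}(R+R^2)\norme{\Fl^{\sigma}(T)}{u}^2,
\]
and by the choice of $T(R)$ the last term can be absorbed into the left-hand side, giving (\ref{estimation controle norme avec donnee}). The main obstacle is not the algebraic combination of the three estimates, but rather justifying the continuity/initialization of $T'\mapsto\norme{\Fl(T')}{u}$ so that the bootstrap is rigorous. This relies on the smoothness of $u$ on $[-T,T]$ together with the definition (\ref{definition norme localisation T}) of the restricted norm, and uses that $\Fl^{\sigma}(T')$-norms behave continuously in $T'$ for functions in $\mathcal{C}([-T,T],\E^{\infty})$, which is precisely where working in the high-regularity class $\E^{\infty}$ coming from Theorem~\ref{proposition existence haute regularite} is essential.
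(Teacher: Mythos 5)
Your overall strategy is the one the paper uses: combine the linear estimate (\ref{equation estimation linéaire}), the energy estimate (\ref{estimation energie globale}) and the bilinear estimate (\ref{estimation bilineaire globale}), close the resulting polynomial inequality first at $\sigma=2$ by a continuity argument to get $\norme{\Fl(T)}{u}\lesssim\norme{\E}{u_0}$, and then feed this uniform bound back in for general $\sigma\supeg 2$ and absorb. The algebraic part of your argument is fine (up to the cosmetic point that to collect the $T^{\mu_0}$ and $T^{2\mu_1}$ factors into a single power you should keep them separate or use the smaller exponent, as the paper does in (\ref{estimation argument continuite})).

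The genuine gap is exactly at the point you flag and then wave away: you run the bootstrap on $\phi(T')=\norme{\Fl(T')}{u}$ and assert that this function of $T'$ is continuous. That continuity is not established anywhere, and it is not obvious: by (\ref{definition norme localisation T}) the $\Fl(T')$-norm is an infimum over all extensions of $u|_{[-T',T']}$ to $\R$, and such infima are only manifestly monotone in $T'$, not continuous; smoothness of $u$ does not by itself repair this. The paper deliberately avoids this issue by running the continuity argument on the auxiliary quantity $\X_{\sigma}(T')=\norme{\Bl^{\sigma}(T')}{u}+\norme{\Nl^{\sigma}(T')}{u\drx u}$ defined in (\ref{definition X argument continuite}), whose continuity, monotonicity and behaviour as $T'\to 0$ are the content of Lemma~\ref{lemme continuite}; the $\Fl^{\sigma}(T)$-norm is then only \emph{controlled} by $\X_{\sigma}(T)$ through the linear estimate, never itself used as the bootstrap functional. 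To make your proof rigorous you should either reproduce this device (bootstrap on $\X_2$, deduce the bound on $\norme{\Fl(T)}{u}$ a posteriori via (\ref{equation estimation linéaire})), or supply an actual proof of the continuity of $T'\mapsto\norme{\Fl(T')}{u}$, which is a nontrivial additional task not needed for the paper's route.
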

\begin{proof}
We fix $\sigma\supeg 2$ and $R>0$ and take $u_0$ as in the proposition.

Let $T=T\left(\norme{\E^3}{u_0}\right)\in ]0;1]$ and $u\in\mathcal{C}([-T;T],\E^{\infty})$ be the solution to (\ref{equation KP1-5}) given by Theorem~\ref{proposition existence haute regularite}. Then, for $T'\in [0;T]$, we define
\begin{equation}\label{definition X argument continuite}
\X_{\sigma}(T') := \norme{\Bl^{\sigma}(T')}{u}+\norme{\Nl^{\sigma}(T')}{u\drx u}
\end{equation}
In order to perform our continuity argument, we will use the following lemma, whose proof is a straightforward adaptation of \cite[Lemma 8.3]{Article1}.
\begin{lemme}\label{lemme continuite}
Let $u\in\mathcal{C}\left([-T;T],\E^{\infty}\right)$, $\sigma\supeg 2$ and $T\in ]0;1]$. Then $\X_{\sigma} : [0;T]\rightarrow \R$ defined above is continuous and nondecreasing, and furthermore
\[\lim_{T'\rightarrow 0}\X_{\sigma}(T')\lesssim \norme{\E^{\sigma}}{u_0}\]
\end{lemme}
Recalling (\ref{equation estimation linéaire})-(\ref{estimation energie globale})-(\ref{estimation bilineaire globale}) for $\sigma \supeg 2$, we then get
\begin{equation}\label{estimations synthese}
\begin{cases}
{\displaystyle \norme{\Fl^{\sigma}(T)}{u}\lesssim \norme{\Bl^{\sigma}(T)}{u}+\norme{\Nl^{\sigma}(T)}{\drx(u^2)}}\\
{\displaystyle \norme{\Bl^{\sigma}(T)}{u}^2\lesssim \norme{\E^{\sigma}}{u_0}^2 + T^{\mu_0}\norme{\Fl(T)}{u}\norme{\Fl^{\sigma}(T)}{u}^2}\\
{\displaystyle \norme{\Nl^{\sigma}(T)}{\drx(uv)}\lesssim T^{\mu_1}\left(\norme{\Fl^{\sigma}(T)}{u}\norme{\Fl(T)}{v}+\norme{\Fl(T)}{u}\norme{\Fl^{\sigma}(T)}{v}\right)}
\end{cases}
\end{equation}
Thus, combining those estimates first with $\sigma=2$, we deduce that
\begin{equation}\label{estimation argument continuite}
\X_{2}(T)^2 \infeg c_1\norme{\E}{u_0}^2+c_2T^{\mu_0}\X_{2}(T)^3+c_3T^{2\mu_1}\X_{2}(T)^4
\end{equation}
for $T\in]0;1]$. Let us set $R:=c_1^{1/2}\norme{\E}{u_0}$. Then we choose $T_0=T_0(R)\in]0;1]$ small enough such that
\begin{equation*}
c_2T_0^{\mu_0}(2R)+c_3T_0^{2\mu_1}(2R)^2<1/2
\end{equation*}
Thus, using Lemma~\ref{lemme continuite} above and a continuity argument, we get that 
\[\X_{2}(T)\infeg 2R \text{ for } T\infeg T_0\]

Using then (\ref{equation estimation linéaire}), we deduce that
\begin{equation}\label{estimation contrôle norme uniforme}
\norme{\Fl(T)}{u} \lesssim \norme{\E}{u_0}
\end{equation}
for $T\infeg T_0$.

Using again (\ref{equation estimation linéaire})-(\ref{estimation energie globale})-(\ref{estimation bilineaire globale}) for $\sigma\supeg 3$ along with (\ref{estimation contrôle norme uniforme}), we then obtain
\begin{equation*}
\begin{cases}
{\displaystyle \norme{\Fl^{\sigma}(T)}{u}\lesssim \norme{\Bl^{\sigma}(T)}{u}+\norme{\Nl^{\sigma}(T)}{f}}\\
{\displaystyle \norme{\Bl^{\sigma}(T)}{u}^2\lesssim \norme{\E^{\sigma}}{u_0}^2 + T^{\mu_0}\norme{\E}{u_0}\norme{\Fl^{\sigma}(T)}{u}^2}\\
{\displaystyle \norme{\Nl^{\sigma}(T)}{\drx(u^2)}\lesssim T^{\mu_1}\norme{\E}{u_0}\norme{\Fl^{\sigma}(T)}{u}}
\end{cases}
\end{equation*}
We thus infer
\begin{equation*}
\X_{\sigma}(T)^2\infeg \widetilde{c_1}\norme{\E^{\sigma}}{u_0}^2+\widetilde{c_2}T^{\mu_0}R\X_{\sigma}(T)^2+\widetilde{c_3}T^{2\mu_1}R^2\X_{\sigma}(T)^2
\end{equation*}
So, up to choosing $T_0$ even smaller, such that
\begin{equation*}
\widetilde{c}_2T_0^{\mu_0}R+\widetilde{c_3}T_0^{2\mu_1}R^2 <1/2
\end{equation*}
we finally obtain (\ref{estimation controle norme avec donnee}).
\end{proof}

\subsection{Uniqueness}
Let $u$, $v$ be two global solutions of (\ref{equation KP1-5}) with data $u_0,v_0\in\E(\T^2)$, and fix $T_*>0$.

Using now (\ref{estimation lineaire difference})-(\ref{estimation energie difference L2})-(\ref{estimation bilineaire difference globale}), we get that for $T\in[0;T_*]$
\begin{equation}\label{estimations difference synthese}
\begin{cases}
{\displaystyle \norme{\overline{\Fl}(T)}{u-v}\lesssim \norme{\overline{\Bl}(T)}{u-v}+\norme{\overline{\Nl}(T)}{\drx\left((u-v)\frac{u+v}{2}\right)}}\\
{\displaystyle \norme{\overline{\Bl}(T)}{u-v}^2\lesssim \norme{L^2}{u_0-v_0}^2 + T^{\mu_0}\norme{\Fl(T)}{u+v}\norme{\overline{\Fl}(T)}{u-v}^2}\\
{\displaystyle \norme{\overline{\Nl}(T)}{\drx\left((u-v)\frac{u+v}{2}\right)}\lesssim T^{\mu_1}\norme{\Fl(T)}{u+v}\norme{\overline{\Fl}(T)}{u-v}}
\end{cases}
\end{equation}
Hence we infer
\begin{multline*}
\X_0(T)^2\lesssim \normL{2}{u_0-v_0}^2+T^{\mu_0}\norme{\Fl(T_*)}{u+v}\X_0(T)^2\\+T^{2\mu_1}\norme{\Fl(T_*)}{u+v}^2\X_0(T)^2
\end{multline*}
where, as in the previous subsection, \[\X_0(T):=\norme{\overline{\B}(T)}{u-v}+\norme{\overline{\Nl}(T)}{\drx\left((u-v)\frac{u+v}{2}\right)}\]
Thus, taking $T_0\in ]0;1[$ small enough such that
\[T_0^{\mu_0}\norme{\Fl(T_*)}{u+v}+T_0^{2\mu_1}\norme{\Fl(T_*)}{u+v}^2<1/2\]
we deduce that ${\displaystyle \X_0(T)\lesssim \normL{2}{u_0-v_0}}$ on $[0;T_0]$ which yields $u\equiv v$ on $[-T_0;T_0]$ provided $u_0=v_0$. Since $T_0$ only depends on ${\displaystyle \norme{\Fl(T_*)}{u+v}}$, we can then repeat this argument a finite number of time to reach $T_*$.

\subsection{Existence}
Again, in view of the conservation of mass, momentum and energy, it suffices to construct local in time solutions. To this aim we proceed as in \cite[Section 4]{IonescuKenigTataru2008}.\\

Take $R>0$, and let $u_0\in\E$ with ${\displaystyle \norme{\E}{u_0}\infeg R}$ and take ${\displaystyle (u_{0,j})\in\left(\E^{\infty}\right)^{\N}}$ with ${\displaystyle \norme{\E}{u_{0,j}}\infeg R}$, such that $(u_{0,j})$ converges to $u_0$ in $\E$. Using the same argument as for Theorem~\ref{theoreme}~\ref{theoreme partie 1}, it suffices to prove that there exists $T=T(R)>0$ such that ${\displaystyle \left(\Phi^{\infty}(u_{0,j})\right)}$ is a Cauchy sequence in $\mathcal{C}([-T;T],\E)$. Indeed, this provides the conservation of the mass, momentum energy for the corresponding limit, which allows us to extend the result to any time $T>0$.\\

Let $T=T(R)$ given by Proposition~\ref{proposition controle solutions regulieres}. For a fixed $M>1$ and $k,j\in\N$, we can split
\begin{multline*}
\norme{L^{\infty}_T\E}{\Phi^{\infty}(u_{0,k})-\Phi^{\infty}(u_{0,j})} \infeg \norme{L^{\infty}_T\E}{\Phi^{\infty}(u_{0,k})-\Phi^{\infty}(P_{\infeg M}u_{0,k})}\\+\norme{L^{\infty}_T\E}{\Phi^{\infty}(P_{\infeg M}u_{0,k})-\Phi^{\infty}(P_{\infeg M}u_{0,j})}+\norme{L^{\infty}_T\E}{\Phi^{\infty}(P_{\infeg M}u_{0,j})-\Phi^{\infty}(u_{0,j})}
\end{multline*}
The middle term is controled with the standard energy estimate
\begin{multline*}
\norme{L^{\infty}_T\E}{u-v}^2\\ \lesssim \norme{\E}{u_0-v_0}^2 + \left(\norme{L^1_TL^{\infty}}{u+v}+\norme{L^1_TL^{\infty}}{\drx(u+v)}\right)\norme{L^{\infty}_T\E}{u-v}^2\\
\lesssim \norme{\E}{u_0-v_0}^2 + T\norme{L^{\infty}_T\E^{10}}{u+v}\norme{L^{\infty}_T\E}{u-v}^2
\end{multline*}
 where the second line follows from a Sobolev inequality. Since
\begin{equation*}
\norme{L^{\infty}_T\E^{\sigma}}{\Phi^{\infty}(P_{\infeg M}u_{0,j})}\infeg C_{\sigma}\norme{\E^{\sigma}}{P_{\infeg M}u_{0,j}}
\end{equation*}
thanks to (\ref{estimation energie avec donnee}), we deduce that
\begin{equation*}
\norme{L^{\infty}_T\E}{\Phi^{\infty}(P_{\infeg M}u_{0,k})-\Phi^{\infty}(P_{\infeg M}u_{0,j})} \infeg C(M)\norme{\E}{u_{0,k}-u_{0,j}}
\end{equation*}
Therefore it remains to treat the first and last terms. Writing $u:=\Phi^{\infty}(u_{0,k})$, $v:=\Phi^{\infty}(P_{\infeg M}u_{0,k})$ and $w:=u-v$, a use of (\ref{estimation injection continue}) provides
\[\norme{L^{\infty}_T\E}{\Phi^{\infty}(u_{0,k})-\Phi^{\infty}(P_{\infeg M}u_{0,k})} \lesssim \norme{\Fl(T)}{w}\]
As before, defining now ${\displaystyle \widetilde{\X}(T'):=\norme{\B(T')}{w}+\norme{\Nl(T')}{w\drx w + v\drx w + w\drx v}}$, we get from (\ref{equation estimation linéaire})-(\ref{estimation energie difference globale})-(\ref{estimation bilineaire globale}) the bound
\[\widetilde{\X}(T')^2\lesssim \norme{\El}{u_0-v_0}^2 + T^{\mu_0}\norme{\Fl^3(T)}{v}\widetilde{\X}(T')^2+T^{2\mu_1}\norme{\Fl(T')}{u+v}^2\widetilde{\X}(T')^2\] 
From (\ref{estimation controle norme avec donnee}), we can bound 
\[\norme{\Fl^3(T)}{v}\lesssim \norme{\E^3}{P_{\infeg M}u_{0,k}}\lesssim C(M)\]
and
\[\norme{\Fl(T)}{u+v}^2\lesssim R^2\]
Thus, taking $M$ large enough and $T<T(R)$ small enough such that
\[T^{\mu_0}C(M) + T^{2\mu_1}R^2 <1/2\]
concludes the proof. 

\section*{Acknowledgements.}
The author is very grateful to Nikolay Tzvetkov for indicating this problem as well as for his valuable comments and remarks.

%
\bibliographystyle{plain}
\bibliography{biblio}
\clearpage
\end{document}